\numberwithin{equation}{section}
\newtheorem{theorem}{Theorem}[section]
\newtheorem{lemma}{Lemma}[section]
\newtheorem{remark}{Remark}[section]
\newtheorem{example}{Example}[section]
\def\jump#1{\llbracket#1\rrbracket}
\newcommand{\normmm}[1]{{\left\vert\kern-0.25ex\left\vert
\kern-0.25ex\left\vert #1
    \right\vert\kern-0.25ex\right\vert\kern-0.25ex\right\vert}}
\begin{document}

\captionsetup[figure]{labelfont={bf},name={Fig.},labelsep=period}

\title{A posteriori error estimates for the mortar\\ staggered DG method}
\author{ Lina Zhao\footnotemark[1]\qquad
\;Eric Chung\footnotemark[2]\qquad}
\renewcommand{\thefootnote}{\fnsymbol{footnote}}
\footnotetext[1]{Department of Mathematics, The Chinese University of Hong Kong, Hong Kong Special Administrative Region. ({lzhao@math.cuhk.edu.hk})}
\footnotetext[2]{Department of Mathematics, The Chinese University of Hong Kong, Hong Kong Special Administrative Region. ({tschung@math.cuhk.edu.hk})}

\date{}

\maketitle

\textbf{Abstract:}
Two residual-type error estimators for the mortar staggered discontinuous Galerkin discretizations of second order elliptic equations are developed. Both error estimators are proved to be reliable and efficient. Key to the derivation of the error estimator in potential $L^2$ error is the duality argument. On the other hand, an auxiliary function is defined, making it capable of decomposing the energy error into conforming part and nonconforming part, which can be combined with the well-known Scott-Zhang local quasi-interpolation operator and the mortar discrete formulation yields an error estimator in energy error. Importantly, our analysis for both error estimators does not require any saturation assumptions which are often needed in the literature. Several numerical experiments are presented to confirm our proposed theories.

\textbf{Keywords:} Staggered grids, Discontinuous Galerkin method, Nonmatching grids, A posteriori error estimates, Adaptive mesh refinement

\section{Introduction}

The mortar element method is a domain decomposition method
with non-overlapping subdomains \cite{Bernardi93,Bernardi94}.
One distinctive feature of mortar finite element method is that the meshes on adjacent subdomains are not required to be matching with each other, which makes the method well suited for problems with complicated geometries. Local features of the solution such as corner singularities or large gradients can be resolved by finer grids in the local region. Furthermore, large scale features such as geological faults and layers in subsurface flow can be modeled with nonmatching grids. Staggered discontinuous Galerkin (SDG) methods pioneered by Chung and Engquist \cite{ChungWave, ChungWave2} earn many desirable properties such as mass conservation, superconvergence and the flexibility to deal with general quadrilateral and polygonal meshes, and have been applied to numerous partial differential equations arising from the practical applications (cf. \cite{ChungCockburn16,LeeKim16,ChungQiu17,DCNStokes,ChungParkLina,CheungChungKim18,LinaPark18,LinaParkShin}). To further advance the applications of SDG method, a mortar formulation is developed for SDG method \cite{KimChung16}, where different triangulations in different regions of the computational domain are exploited. In the framework proposed therein, SDG discretization is employed in each subdomain and
the continuity of the solution across the subdomain interfaces
is imposed through the introduction of the Lagrange multipliers.
The analysis developed therein shows that optimal convergence rates in both $L^2$ and discrete energy norms are achieved. In addition, the numerical results there illustrate that if the
exact solution earns local singularities, one can only obtain optimal convergence rates in regularity, not in rate. To efficiently capture the singularities and achieve optimal approximation with minimum degrees of freedom, adaptive finite element method based on a posteriori error estimators can be utilized. Due to the nonmatching meshes across the subdomain interfaces, mortar finite element methods are favored for adaptive mesh refinement. Indeed, nonmatching grids can be used on the different subdomains of a partition, this can highly reduce the number of degrees of freedom since no further nodes must be added to avoid the nonconforming meshes on the subdomain interfaces.

A posteriori error estimators have been actively studied for
mixed finite element methods and discontinuous Galerkin methods on conforming grids \cite{Braess96,Verf96,Carsten97,KPDG,Ainsworth07,kim07, LarsonAxel08,KimPark08,martin10,kimpark-sinum10,ckp11,ErnVohralik15,cg16,DuChung18,LinaPark19}
since the pioneering work of Babu\v{s}ka and Rheinboldt \cite{Babuska78, Babuska782}. However, a posteriori error
analysis for the discretization problems on nonmatching grids
is still a largely undeveloped area. Wohlmuth introduces residual type and hierarchical type a posteriori error estimators in \cite{Wohlmuth99,Wohlmuth992} for mortar finite element methods. Wheeler and Yotov \cite{Wheeler05} propose two types of a posteriori error estimators for the mortar mixed finite element method. All these estimators are constructed with some saturation assumptions. To avoid this assumption, Bernardi and Hecht present some residual type error estimators without the presence of saturation assumptions \cite{Bernardi02}. But the mesh nodes are required to be coinciding on the interface. To extend the current framework of a posteriori error analysis developed on matching grids to nonmatching grids in a more general sense, it is important to exclude the mesh restrictions on the mortar and non-mortar sides, and avoid the saturation assumptions. Recently, some residual type a posteriori error estimators are developed based on the posteriori version of the well known Strang Lemma \cite{WangXu}, where the aforementioned restrictions are remitted.

The purpose of this paper is to design and analyze two residual type error estimators for mortar SDG method without any saturation assumptions. We first derive a reliable and efficient error estimator for mortar SDG method in potential $L^2$ error, where
the key ingredient is the duality argument. In contrast to the general error estimators developed for conforming grids, the jump of solution across the subdomain interfaces and a mortar flux difference term are also involved.
Then we propose an error estimator in
energy error, where some difficulties arise due to the following aspects: First, the interface grids are nonmatching across the adjacent subdomains, defining related conforming finite element spaces on the non-matching meshes as the methodology exploited in \cite{ch07} is impossible; Second, mortar SDG method is a mixed type method, applying the Galerkin orthogonality directly like those proposed in \cite{WangXu} is infeasible. To overcome the aforementioned difficulties, we employ the well-known Scott-Zhang local quasi-interpolation operator defined in \cite{ScottZhang90}, which is conforming in each subdomain avoiding constructing a conforming operator over the whole domain that is usually cumbersome. Then we define an auxiliary function $s\in H^1_0(\Omega)$, which enables us to decompose the energy error into conforming part and nonconforming part. Combing the above primary ingredients, the error estimator in energy error can be derived. Again, in addition to element residual terms, the jump of solution across the subdomain interfaces and the mortar flux difference terms are also involved, where the presence of the additional terms is due to the mortar matching condition. We emphasize that our analysis for both error estimators does not need any mesh restrictions on the mortar and non-mortar sides, and saturation assumptions are also avoided.


The rest of the paper is organized as follows. In the next section, we briefly introduce the mortar formulation of SDG method. In Section~\ref{reliability}, two residual type error estimators are proposed, and the reliability of the proposed error estimators are proved. Then, the efficiency of the proposed error estimators are established in Section~\ref{Sec:efficiency}. Several numerical experiments are carried out in Section~\ref{Sec:numerical}, where the performances of the two error estimators are displayed. Numerical results demonstrate
that singularities can be well captured and optimal convergence rates can be achieved under the adaptive mesh refinement.
Finally, some conclusions are given at the end of this paper.

\section{Mortar formulation of SDG method}
In this section, we briefly describe the mortar formulation of SDG method by following the framework developed in \cite{KimChung16}. The primary ingredient is to impose the continuity of the solution across subdomain interfaces by a mortar matching condition. To begin, we consider the following second order elliptic problem in two dimensions:
\begin{equation}
\begin{split}
-\nabla \cdot (\rho \nabla u)&=f \quad \mbox{in}\; \Omega,\\
u&=0 \quad \mbox{on}\; \partial \Omega,
\end{split}
\label{model}
\end{equation}
where $\Omega$ is the computational domain and $f(x)$ is a given source function. We divide the domain $\Omega$ into a set of $N$ non-overlapping subdomains, $\bar{\Omega}=\cup_{i=1}^N\bar{\Omega}_i$. We assume, for simplicity, that $\{\Omega_i\}_{i=1}^N$ is a geometrically conforming partition of $\Omega$. We further assume that $\rho$ is a piecewise constant function, which equals $\rho_i$ in $\Omega_i$. Every subdomain $\Omega_i$ is equipped with a quasi-uniform triangulation $\mathcal{T}_{h_i}$ with mesh size $h_i>0$. The triangulations $\{\mathcal{T}_{h_i}\}_{i=1}^N$ can be non-matching across the subdomain interface $\Gamma=\cup_{i,j=1}^N\Gamma_{ij}$, where $\Gamma_{ij}(=\partial \Omega_i\cap \partial \Omega_j)$ is the interface shared by the two subdomains $\Omega_i$ and $\Omega_j$. In addition, $S=\{(i,j): \Omega_i\, \mbox{and}\, \Omega_j\, \mbox{have non-empty intersection}\}$. In addition, we define $\Gamma_i=\partial \Omega_i\cap \Gamma=\partial \Omega_i\backslash \partial \Omega$.

Let $D\subset \mathbb{R}^d, d=1,2$, we adopt the standard notations for
the Sobolev spaces $H^s(D)$ and their associated norms $\|\cdot\|_{s,D}$, and semi-norms $|\cdot|_{s,D}$ for $s\geq 0$. The space $H^0(D)$ coincides with $L^2(D)$, for which the norm
and inner products are denoted as $\|\cdot\|_D$ and $(\cdot,\cdot)_D$, respectively. If $D=\Omega$, the subscript $\Omega$ will be dropped unless otherwise mentioned. In the sequel, we use $C$ to denote a generic positive constant independent of the meshsize which can have different values at different occurrences.

Next, we define some spaces which will be utilized later
\begin{align*}
Q_i=L^2(\Omega_i)^2,\quad Q=\prod_{i=1}^NQ_i,\quad M=L^2(\Gamma)
\end{align*}
and
\begin{align*}
V_i=\{v\in H^1(\Omega_i), v=0\;\mbox{on}\; \partial \Omega_i\cap \partial \Omega\},\quad V=\prod_{i=1}^N V_i.
\end{align*}
We rewrite (\ref{model}) into a first order system by introducing an additional unknown $\bm{z}$
\begin{equation*}
\begin{split}
\bm{z}&=\rho \nabla u\hspace{0.45cm} \mbox{in}\;\Omega,\\
-\nabla \cdot\bm{z}&=f\hspace{0.9cm}\mbox{in}\;\Omega,\\
u&=0 \hspace{0.9cm}\mbox{on}\;\partial \Omega,
\end{split}
\end{equation*}
which can be recast into the equivalent subdomain problem
\begin{equation}
\begin{split}
\bm{z}_i&=\rho_i \nabla u_i \hspace{0.3cm} \mbox{in}\; \Omega_i,\\
-\nabla \cdot \bm{z}_i&=f \hspace{0.6cm}\quad \mbox{in} \;\Omega_i,\\
\bm{z}_i\cdot \bm{n}_i&=\lambda_i \hspace{0.8cm}\mbox{on}\; \partial \Omega_i\backslash \partial \Omega,\\
u_i&=0 \hspace{0.6cm}\quad \mbox{on}\; \partial \Omega_i \cap \partial \Omega,\\
\lambda_i+\lambda_j&=0 \hspace{0.9cm}\mbox{for all} \;(i,j)\in S
\end{split}
\label{subdomain-system}
\end{equation}
with an additional condition that $u_i$ are continuous across the subdomain interface. Let $\bm{n}_{ij}$ be the fixed unit normal direction on $\Gamma_{ij}$ common to the two subdomains $\Omega_i$ and $\Omega_j$. We define $\lambda$ on $\Gamma$ as
\begin{align*}
\lambda\mid_{\Gamma_{ij}}=\lambda_i\, \bm{n}_i\cdot \bm{n}_{ij}=\lambda_j\,\bm{n}_j\cdot\bm{n}_{ij} \quad \forall\, \Gamma_{ij}\subset \Gamma.
\end{align*}

Multiplying the equations in \eqref{subdomain-system} by the corresponding test functions and integration by parts, we obtain the weak formulation: find $(u,\bm{z},\lambda)\in V\times Q \times M$ such that
\begin{equation}
\begin{split}
\rho_i^{-1}(\bm{z}, \bm{q})_{\Omega_i}&=(\nabla u, \bm{q})_{\Omega_i}\quad \forall \bm{q}\in Q_i,\\
(\bm{z}, \nabla v)_{\Omega_i}-(\lambda \bm{n}_{ij}\cdot\bm{n}_i, v \, )_{\Gamma_i}&=(f,v)_{\Omega_i}\qquad \forall v\in V_i,\\
\sum_{i=1}^N(\jump{u}, \mu )_{\Gamma_i}&=0 \hspace{1.7cm} \forall \mu\in M,
\end{split}
\label{weak}
\end{equation}
where $\lambda \,\bm{n}_{ij}\cdot\bm{n}_i=\bm{z}\mid_{\Omega_i}\cdot \bm{n}_i$.

We can rewrite \eqref{weak} as: find $(\bm{z}, u, \lambda)\in Q\times V
\times M$ such that
\begin{align*}
A(\bm{z}, u, \lambda; \bm{q}, v, \mu)=(f,v)\quad \forall  ( \bm{q}, v, \mu)\in Q\times V\times M,
\end{align*}
where
\begin{equation*}
\begin{split}
A(\bm{z}, u, \lambda; \bm{q}, v, \mu)=\sum_{i=1}^N\Big(\rho_i^{-1}(\bm{z}, \bm{q})_{\Omega_i}-(\nabla u, \bm{q})_{\Omega_i}+(\bm{z}, \nabla v)_{\Omega_i}-(\lambda, v \,\bm{n}_i\cdot \bm{n}_{ij})_{\Gamma_i}+ (u\bm{n}_i\cdot \bm{n}_{ij}, \mu)_{\Gamma_i}\Big).
\end{split}
\end{equation*}

We then present the construction of the SDG spaces for each $\Omega_i$, and the construction follows the framework given in \cite{KimChung16}. To this end, we first introduce some notations that will be employed later. We let $\mathcal{F}_{u,i}$ be the set of edges in the initial triangulation $\mathcal{T}_{h_i}$ excluding the edges on the interface and $\mathcal{F}_{u,i}^0\subset \mathcal{F}_{u,i}$ be the set of interior edges. For each triangle $\tau\in \mathcal{T}_{h_i}$, we divide it into three subtriangles by connecting an interior point to the three vertices. We note that the interior point can be chosen as the centroid of the triangle to get a good regularity
of the subdivided triangulation.

We denote by $\mathcal{T}_i$ the resulting finer triangulation and by $\mathcal{F}_{p,i}$ the set of edges generated by the subdivision process. In addition, we let $\mathcal{T}=\cup_{i=1}^N\mathcal{T}_{i}$, $\mathcal{T}_h=\cup_{i=1}^N \mathcal{T}_{h_i}$, $\mathcal{F}_p=\cup_{i=1}^N\mathcal{F}_{p,i}$, $\mathcal{F}_u=\cup_{i=1}^N \mathcal{F}_{u,i}$, $\mathcal{F}_u^0=\cup_{i=1}^N \mathcal{F}_{u,i}^0$, $\mathcal{F}^0=\mathcal{F}_u^0\cup\mathcal{F}_p$, and $\mathcal{F}=\mathcal{F}_u\cup\mathcal{F}_p$. We use $h_\tau$ to denote the diameter of $\tau\in \mathcal{T}$, $h_e$ to denote the length of edge $e$, and $h=\max_{\tau\in \mathcal{T}}h_\tau$.

For each edge $e$, we define
a unit normal vector $\bm{n}_{e}$ as follows: If $e\in \mathcal{F}\setminus \mathcal{F}^{0}$, then
$\bm{n}_{e}$ is the unit normal vector of $e$ pointing towards the outside of $\Omega$. If $e\in \mathcal{F}^{0}$, an
interior edge, we then fix $\bm{n}_{e}$ as one of the two possible unit normal vectors on $e$.
When there is no ambiguity,
we use $\bm{n}$ instead of $\bm{n}_{e}$ to simplify the notation.

Let $k\geq 0$ be the order of polynomial used for the approximation and $P^k(\tau)$ be the set of polynomials with degree less than or equal to $k$ defined on $\tau$. We define the following spaces
\begin{align*}
Q_{h_i}=\{\bm{q}:\bm{q}\mid_{\tau}\in P^k(\tau)^2, \tau\in \mathcal{T}_i \; \mbox{and}\; \jump{\bm{q}\cdot\bm{n}}\mid_e=0, \;\forall e\in \mathcal{F}_{p,i}\}
\end{align*}
and
\begin{align*}
V_{h_i}=\{v:v\mid_\tau\in P^k(\tau), \tau\in \mathcal{T}_i \;\mbox{and}\; \jump{v}\mid_e=0,\; \forall e\in \mathcal{F}_{u,i}\},
\end{align*}
where the jumps $\jump{\bm{q}\cdot\bm{n}}\mid_e$ and $\jump{v}\mid_e$ are defined in the standard way
\begin{align*}
\jump{\bm{q}\cdot\bm{n}}\mid_e:=\bm{q}\mid_{\tau_1}\cdot\bm{n}-
\bm{q}\mid_{\tau_2}\cdot\bm{n}\quad \mbox{and}\quad \jump{v}\mid_e:=v\mid_{\tau_1}-v\mid_{\tau_2}.
\end{align*}
In the above, $\tau_1$ and $\tau_2$ are the two triangles with the common edge $e$. In the above definition, we assume $\bm{n}$ is pointing from $\tau_1$ to $\tau_2$. In addition, we define
\begin{align*}
V_{h_i}^0=\{v\in V_{h_i}: v=0\;\mbox{on}\; \partial \Omega\cap \partial \Omega_i\}.
\end{align*}
On the whole computational domain, we define $V_h:=\prod_{i=1}^NV_{h_i}^0$ and $Q_h:=\prod_{i=1}^N Q_{h_i}$.

\begin{figure}[H]
\centering
\includegraphics[width=6cm]{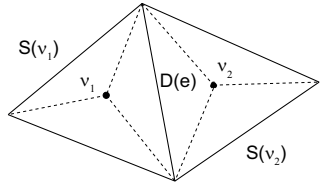}
\caption{Region $S(\nu)$ (initial triangle) and Region $D(e)$ (dotted quadrilateral): $\mathcal{F}_u$ (initial edges, solid line) and $\mathcal{F}_p$ (new edges, dotted line) .}
\label{mesh}
\end{figure}


\begin{figure}[H]
\setlength{\abovecaptionskip}{-26pt}
\centering
\includegraphics[width=6cm]{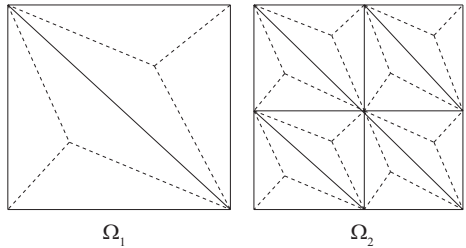}
\caption{Nonmatching initial triangulation in two neighbouring subdomains}
\label{initial}
\end{figure}

We recall that $\Gamma_{ij}$ is the interface between $\Omega_i$ and $\Omega_j$, see {\sc Fig}.~\ref{initial}. On $\Gamma_{ij}$,
we introduce two different meshes called $T_{ij,i}$ and $T_{ij,j}$, which are respectively defined as the restrictions
of $\mathcal{T}_{h_i}$ and $\mathcal{T}_{h_j}$ on $\Gamma_{ij}$. Among these two meshes, we select one as non-mortar mesh and the other as mortar mesh. For the non-mortar mesh, say $T_{ij,i}$, we introduce the space of Lagrange multipliers $M_{ij}=V_{h_i}^0\mid_{\Gamma_{ij}}$, which consists of piecewise polynomials of degree up to $k$ defined on $\Gamma_{ij}$ with respect to the mesh $T_{ij,i}$. Also, we denote the union of all the non-mortar mesh as $\mathcal{T}^{\Gamma,h}=\cup_{i,j=1}^N T_{ij,i}$. In addition, we define $M_h:=\Pi_{(i,j)\in S} M_{ij}$. The space $M_h$ is used to enforce continuity of functions in $V_h$. Specially, we define the following mortar SDG space for the approximation of $u$
\begin{align*}
\widehat{V}_h=\{v=(v_1, \ldots, v_N)\in V_h: \int_{\Gamma_{ij}} (v_i-v_j)\psi\;ds=0 \quad \forall \psi\in M_{ij},\; \forall\, \Gamma_{ij}\subset \Gamma\}.
\end{align*}
Furthermore, we use $S(\nu)$ to denote the triangle in the initial triangulation $\mathcal{T}_{h_i}$ with $\nu$ denoting the interior point chosen in the above subdivision process. Thus, $S(\nu)$ is the union of the three triangles in $\mathcal{T}_i$ having the interior point $\nu$ as a common vertex. For an edge $e\in \mathcal{F}_{u,i}^0$, we let $D(e)$ be the union of the two triangles in $\mathcal{T}_i$ sharing the edge $e$, and for an
edge $e\in \mathcal{F}_{u,i}\cap \partial \Omega_i$, we let $D(e)$ be the triangle in $\mathcal{T}_i$ having the edge $e$, see {\sc Fig}.~\ref{mesh} for an illustration. In addition for $e\in \mathcal{T}^{\Gamma,h}$, we use $D(e)$ to denote the union of the simplicial submeshes on both sides sharing the edge $e$ or part of $e$.


To derive the discrete version for \eqref{subdomain-system}, we introduce $\lambda_h\in M_h$ to approximate $\lambda$. We note that $\lambda_h\mid_{\Gamma_{ij}}$ is considered as an approximation of the flux $\bm{z}\cdot \bm{n}_{ij}$ on $\Gamma_{ij}$.
Following \cite{KimChung16}, we define
\begin{align*}
c_i(v,\mu)=\sum_{\Gamma_{ij}\subset \partial \Omega_i}\int_{\Gamma_{ij}}v\,\mu\, \bm{n}_i\cdot \bm{n}_{ij}\;ds \quad \forall (v,\mu)\in V_{h_i}^0\times \prod_{(i,j)\in S}M_{ij}.
\end{align*}
We also define the following bilinear forms
\begin{align*}
b_i(\bm{z},v)&=( \bm{z}, \nabla v)_{\Omega_i}-\sum_{e\in \mathcal{F}_{p,i}}( \bm{z}\cdot \bm{n},\jump{v})_e,\\
b_i^*(v,\bm{q})&=-(v,\nabla \cdot \bm{q})_{\Omega_i}+\sum_{e\in \mathcal{F}_{u,i}^0} (v,\jump{\bm{q}\cdot \bm{n}})_e+(v,\bm{q}\cdot \bm{n}_i)_{\Gamma_i},
\end{align*}
where the gradient and divergence operators are elementwise operators. Integration by parts reveals that the above bilinear forms are adjoint to each other, namely, $b_i(\bm{q}, v)=b_i^*(v, \bm{q})$\; $\forall (v,\bm{q})\in V_{h_i}^0\times Q_{h_i}$.

With the aforementioned ingredients, the mortar SDG discretization for \eqref{subdomain-system} reads: find $(u_h, \bm{z}_h, \lambda_h)\in V_h\times Q_h\times M_h$ such that
\begin{equation}
\begin{split}
\rho_i^{-1}(\bm{z}_h, \bm{q})_{\Omega_i}&=b_i^*(u_h, \bm{q})\quad\; \forall \bm{q}\in Q_{h_i}, \; i=1,\ldots, N,\\
b_i(\bm{z}_h, v)-c_i(v, \lambda_h)&=(f,v)_{\Omega_i}\qquad \forall v\in V_{h_i}^0, \; i=1,\ldots, N,\\
\sum_{i=1}^Nc_i(u_h, \mu)&=0 \hspace{1.7cm} \forall  \mu\in M_h,
\end{split}
\label{discrete}
\end{equation}
which can be rewritten as: find $(u_h, \bm{z}_h, \lambda_h)\in V_h\times Q_h\times M_h$ such that
\begin{align*}
A_h(\bm{z}_h, u_h, \lambda_h; \bm{q}, v, \mu)=(f,v) \quad \forall (v,\bm{q}, \mu)\in V_h\times Q_h\times M_h,
\end{align*}
where
\begin{align}
A_h(\bm{z}_h, u_h, \lambda_h; \bm{q}, v, \mu)=\sum_{i=1}^N\Big(\rho_i^{-1}(\bm{z}, \bm{q})_{\Omega_i}-b_i^*(u_h, \bm{q})+b_i(\bm{z}_h, v)-c_i(v, \lambda_h)+c_i(u_h, \mu)\Big).\label{global-discrete}
\end{align}
We infer from integration by parts
\begin{align}
A_h(\bm{z}-\bm{z}_h, u-u_h, \lambda-\lambda_h; \bm{q}, v, \mu)=0 \quad \forall (v,\bm{q}, \mu)\in V_h\times Q_h\times M_h.\label{consistency}
\end{align}

We recall some a priori error estimates from \cite{KimChung16} which are needed later to illustrate the efficiency of the proposed error estimators.
\begin{lemma}\label{priori-estimate}
Let $u|_{\Omega_i}\in H^{1+\sigma_i}(\Omega_i)$ with $\sigma_i>1/2$. Let $(\bm{z}_h,u_h, \lambda_h)\in Q_h\times V_h\times M_h$ be the solution of (\ref{discrete}). Then the following estimates hold
\begin{align*}
\sum_{i=1}^N\rho_i\|\nabla (u-u_h)\|_{0,\Omega_i}^2&\leq C \sum_{i=1}^N\rho_ih_i^{2\min\{\sigma_i,k\}}
\|u\|_{\sigma_i+1,\Omega_i}^2,\\
\sum_{i=1}^N\rho_i\|u-u_h\|_{0,\Omega_i}^2&\leq C \sum_{i=1}^N\rho_ih_i^{2\min\{\sigma_i,k\}+2}\|u\|_{\sigma_i+1,\Omega_i}^2,\\
\sum_{i=1}^N \rho_i^{-1}\|\bm{z}-\bm{z}_h\|_{0,\Omega_i}^2&\leq C \sum_{i=1}^N\rho_ih_i^{2\min\{\sigma_i,k+\frac{1}{2}\}}
\|u\|_{\sigma_i+1,\Omega_i}^2.
\end{align*}
\end{lemma}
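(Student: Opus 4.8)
The plan is to establish the three a priori bounds by combining the inf–sup stability of the mortar SDG formulation with suitable interpolation estimates, following the standard mixed-method error-analysis template adapted to the nonmatching setting. First I would introduce the natural mesh-dependent norm on $Q_h\times V_h\times M_h$, say $\normmm{(\bm q,v,\mu)}^2 = \sum_i\big(\rho_i^{-1}\|\bm q\|_{0,\Omega_i}^2+\|\nabla v\|_{0,\Omega_i}^2 + \text{(stabilization / jump terms)}+\text{(norm of }\mu\text{)}\big)$, and recall from \cite{KimChung16} that $A_h$ satisfies an inf–sup condition with respect to this norm, uniformly in $h$. Using the consistency identity \eqref{consistency}, the error $(\bm z-\bm z_h,u-u_h,\lambda-\lambda_h)$ is $A_h$-orthogonal to the discrete space, so by a Strang-type argument the total error is controlled by the best-approximation error of $(\bm z,u,\lambda)$ in the discrete spaces, plus a consistency term measuring how well the mortar constraint is satisfied by the interpolant of the exact solution.

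The second step is to supply the approximation estimates. For $u|_{\Omega_i}\in H^{1+\sigma_i}(\Omega_i)$ one uses the SDG interpolation operator on $V_{h_i}$, which on the subdivided mesh $\mathcal T_i$ reproduces polynomials of degree $k$ and gives $\|\nabla(u-I_h u)\|_{0,\Omega_i}\le C h_i^{\min\{\sigma_i,k\}}\|u\|_{\sigma_i+1,\Omega_i}$ and $\|u-I_h u\|_{0,\Omega_i}\le C h_i^{\min\{\sigma_i,k\}+1}\|u\|_{\sigma_i+1,\Omega_i}$; for $\bm z=\rho\nabla u$ one uses the commuting/interpolation operator onto $Q_{h_i}$, whose approximation order carries the extra $\tfrac12$ from a trace term on $\mathcal F_{p,i}$, yielding $\rho_i^{-1}\|\bm z-\Pi_h\bm z\|_{0,\Omega_i}\le C\rho_i h_i^{\min\{\sigma_i,k+1/2\}}\|u\|_{\sigma_i+1,\Omega_i}$. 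The weighting by $\rho_i$ (resp. $\rho_i^{-1}$) is tracked throughout so the final constants are $\rho$-robust. These are exactly the pieces quoted from \cite{KimChung16}, so here I would simply cite them rather than rederive.

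The third step is the mortar consistency term: since $I_h u$ need not lie in $\widehat V_h$, one must bound $\sum_{i}c_i(u-I_h u,\mu)$ over $\mu\in M_h$, and show this is of the same order. This uses that the exact solution satisfies $\jump{u}=0$ on $\Gamma$, together with a trace inequality and the approximation property of the Lagrange-multiplier space on the non-mortar mesh; the usual mortar stability (the non-mortar projection is $L^2$-stable) closes it at order $h_i^{\min\{\sigma_i,k\}}$ for the energy estimates and with one extra power for the $L^2$ estimate. The $L^2$-estimate on $u-u_h$ is then obtained by the Aubin–Nitsche duality argument: introduce the dual problem $-\nabla\cdot(\rho\nabla\varphi)=u-u_h$, test the error equation against its interpolant, and use the already-established energy and flux bounds together with the $H^2$-regularity of $\varphi$ to gain the extra power of $h_i$.

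The main obstacle is controlling the mortar-consistency term uniformly in the nonmatching mesh sizes $h_i$ and $h_j$ and in $\rho$: one must pair a quantity living on the finer of the two interface traces against multipliers on the non-mortar mesh without losing a power of the mesh ratio, which is where the stability of the non-mortar $L^2$-projection and a careful choice of which side is designated non-mortar enter. Since Lemma~\ref{priori-estimate} is quoted verbatim from \cite{KimChung16}, the honest plan is to reference that source for the detailed verification of this step and present the above as the structural outline.
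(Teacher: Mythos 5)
The paper gives no proof of Lemma~\ref{priori-estimate}: it is explicitly recalled from \cite{KimChung16} (``We recall some a priori error estimates from \cite{KimChung16}\ldots''), which is exactly the conclusion you reach at the end of your proposal. Your structural sketch (inf--sup stability of $A_h$, Strang-type best approximation, a mortar-consistency bound on the non-mortar side, and Aubin--Nitsche duality for the $L^2$ estimate) is a plausible outline of the argument carried out in \cite{KimChung16}, so your approach coincides with the paper's own treatment, which is simply to cite that reference.
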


\section{Reliability}\label{reliability}
In this section, two residual-type error estimators are proposed. First, we develop an error estimator in potential $L^2$ error, which mainly relies on the duality argument. Next, we propose an energy error estimator based on an auxiliary function and the well-known Scott-Zhang local quasi-interpolation operator.

\subsection{Potential error estimator}
To begin, we recall the following trace inequality
\begin{align}
\|v\|_{0,e}\leq C(h_\tau^{-\frac{1}{2}}\|v\|_{0,\tau}+h_\tau^{\frac{1}{2}}\|\nabla v\|_{0,\tau}) \quad \forall v\in H^1(\tau), \quad \forall \tau\in \mathcal{T},  \;e\in \partial \tau.\label{trace}
\end{align}
and
\begin{align}
\|v\|_{1/2,e}\leq C \|v\|_{1,\tau} \quad \forall v\in H^1(\tau), \tau\in \mathcal{T}, e\in \partial \tau. \label{trace2}
\end{align}
We then define two interpolation operators $I_i:H^1(\Omega_i)\rightarrow V_{h_i}$ and $J_i: H^{\epsilon}(\Omega_i)^2\rightarrow Q_{h_i},\epsilon>1/2$ by
\begin{align*}
(I_i v, \phi)_\tau&=(v, \phi)_\tau \quad \forall \phi\in P^{k-1}(\tau), \;\tau\in \mathcal{T},\\
(I_iv, \phi)_e&=(v,\phi)_e\quad\; \forall \phi\in P^k(e),\;e\in \mathcal{F}_{u}
\end{align*}
and
\begin{align*}
(J_i\bm{\tau},\bm{q})_\tau&=(\bm{\tau},\bm{q})_\tau \qquad\;\forall \bm{q}\in P^{k-1}(\tau)^2,\tau\in \mathcal{T},\\
(J_i \bm{\tau}\cdot \bm{n}_e, \phi)_e&=(\bm{\tau}\cdot \bm{n}_e, \phi)_e \quad \forall \phi\in P^k(e),\; e\in \mathcal{F}_{p}.
\end{align*}

In addition, we let $\pi_e$ be the $L^2$ projection operator onto $M_h$. Then, the following inequalities hold true for smooth enough functions $\bm{q}, v$ and $\mu$ (cf. \cite{Ciarlet78,KimChung16})
\begin{equation}
\begin{split}
\|\mu-\pi_e\mu\|_{0,e}&\leq C h_e^{1/2}\|\mu\|_{1/2,e},\\
\|v-I_iv\|_{0,\tau}&\leq C h_\tau^{l+1}\|v\|_{l+1,\tau},\quad l=0,1,\\
\|v-I_iv\|_{0,e}&\leq C h_e^{3/2}\|v\|_{2,\tau},\\
\|\bm{q}-J_i\bm{q}\|_{0,\tau}&\leq C h_\tau\|\bm{q}\|_{1,\tau},\\
\|\bm{q}-J_i \bm{q}\|_{0,e}&\leq C h_\tau^{1/2}\|\bm{q}\|_{1,\tau}.
\end{split}
\label{approximation}
\end{equation}
On each element $\tau\in \mathcal{T}$, we define the local error estimator as
\begin{align*}
\eta_\tau^2&=h_\tau^4\|f+\nabla \cdot \bm{z}_h\|_{0,\tau}^2+h_\tau^2\|\rho^{-1}\bm{z}_h-\nabla u_h\|_{0,\tau}^2+\sum_{e\in \partial \tau\cap \mathcal{F}_{u}^0}h_e^3\|\jump{\bm{z}_h\cdot \bm{n}}\|_{0,e}^2\\
&\;+\sum_{e\in \partial \tau\cap \Gamma}h_e^3\|\lambda_h\bm{n}_i\cdot \bm{n}_{ij}-\bm{z}_h\cdot \bm{n}_i\|_{0,e}^2+\sum_{e\in \partial \tau \cap (\mathcal{T}^{\Gamma, h}\cup \mathcal{F}_p)}h_e\|\jump{u_h}\|_{0,e}^2.
\end{align*}
Then the global error estimator in potential $L^2$ error can be defined as
\begin{align}
\eta_1^2=\sum_{\tau\in \mathcal{T}}\eta_{\tau}^2.\label{eta-1}
\end{align}
The main result of this section can be stated in the next theorem.
\begin{theorem}\label{thm:eta1}
Let $(\bm{z}, u)$ be the weak solution of \eqref{weak} and $(\bm{z}_h,u_h, \lambda_h)\in Q_h\times V_h\times M_h$ be the solution of \eqref{discrete}. Let $\eta_1$ be defined in \eqref{eta-1}, then there exists a positive constant $C$ such that
\begin{align*}
\|u-u_h\|_0\leq C \eta_1.
\end{align*}
\end{theorem}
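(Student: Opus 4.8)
The plan is to use a duality (Aubin–Nitsche) argument. Let $e_u = u - u_h$ and introduce the dual problem: find $w\in H^1_0(\Omega)$ with $-\nabla\cdot(\rho\nabla w) = e_u$ in $\Omega$. Elliptic regularity (piecewise, on each $\Omega_i$, using that $\rho$ is piecewise constant and the partition geometrically conforming) gives $\sum_i \|w\|_{2,\Omega_i}^2 \le C\|e_u\|_0^2$. Setting $\bm{y} = \rho\nabla w$, the pair $(\bm{y},w)$ solves the first-order dual system, so by integration by parts against $e_u$ we obtain a representation
\[
\|e_u\|_0^2 = \sum_{i=1}^N\Big( (\rho\nabla w, \nabla e_u)_{\Omega_i} - \langle \bm{y}\cdot\bm{n}_i, e_u\rangle_{\Gamma_i}\Big)
\]
or, more usefully, an expression of $\|e_u\|_0^2$ in terms of the continuous bilinear form $A$ evaluated at the error triple against the smooth dual data. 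The key algebraic step is then to subtract the interpolants $(I_i w, J_i\bm{y}, \pi_e(\bm{y}\cdot\bm{n}))$ of the dual solution and exploit the Galerkin-type orthogonality \eqref{consistency}: $A_h(\bm{z}-\bm{z}_h, u-u_h, \lambda-\lambda_h; \bm{q}, v, \mu) = 0$ for all discrete test functions. This replaces the smooth test triple $(w,\bm{y},\lambda_w)$ by $(w - I_i w,\ \bm{y} - J_i\bm{y},\ \lambda_w - \pi_e\lambda_w)$, which are small.

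Next I would rewrite the resulting terms elementwise and edgewise, integrating by parts back onto each $\tau\in\mathcal{T}$ so that the element residual $f + \nabla\cdot\bm{z}_h$, the constitutive residual $\rho^{-1}\bm{z}_h - \nabla u_h$, the flux jumps $\jump{\bm{z}_h\cdot\bm{n}}$ on $\mathcal{F}_u^0$, the mortar flux mismatch $\lambda_h\bm{n}_i\cdot\bm{n}_{ij} - \bm{z}_h\cdot\bm{n}_i$ on $\Gamma$, and the solution jumps $\jump{u_h}$ on $\mathcal{F}_p\cup\mathcal{T}^{\Gamma,h}$ appear paired against the corresponding interpolation errors of $w$, $\bm{y}$, or $\lambda_w$. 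Each such pairing is bounded by Cauchy–Schwarz followed by the approximation estimates \eqref{approximation} and the trace inequalities \eqref{trace}, \eqref{trace2}; the powers of $h_\tau$ and $h_e$ in the definition of $\eta_\tau^2$ are exactly chosen so that the dual-side factors collapse to $\|w\|_{2,\tau}$ (or the corresponding $H^1$/$H^{1/2}$ norms), and summing over $\tau$ and using the regularity bound on $w$ yields $\|e_u\|_0^2 \le C\,\eta_1\,\|e_u\|_0$, hence the claim. One must also handle the jump term $\jump{u_h}$ on the subdomain interfaces: the mortar condition says only that $\int_{\Gamma_{ij}}\jump{u_h}\psi\,ds = 0$ for $\psi\in M_{ij}$, so $\jump{u_h}$ is orthogonal to the non-mortar polynomial space, and one inserts $\pi_e$ to subtract off that component, leaving $\jump{u_h}$ paired against $w - \pi_e w$ (or $I_i w$ restricted to the interface), which is of order $h_e^{1/2}$.

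The main obstacle I anticipate is the careful bookkeeping at the nonmatching interface $\Gamma$: because $T_{ij,i}$ and $T_{ij,j}$ do not align, an edge $e\in\mathcal{T}^{\Gamma,h}$ is split on the mortar side, the normals $\bm{n}_i$ and $\bm{n}_j$ point oppositely, and the Lagrange multiplier $\lambda_h$ lives only on the non-mortar mesh; one has to choose which trace of the dual solution to project and verify that the mortar orthogonality can still be applied on each non-mortar edge (rather than on a matched interface edge). The second delicate point is ensuring the piecewise $H^2$ regularity constant for the dual problem is independent of the contrast in $\rho$ in the way the estimate is stated — here the geometric conformity of the subdomain partition and the fact that $\bm{y}\cdot\bm{n}$ is continuous across $\Gamma$ are what make the interface terms in the dual integration-by-parts telescope correctly. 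Everything else is a routine, if lengthy, application of Cauchy–Schwarz plus \eqref{approximation}.
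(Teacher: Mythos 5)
Your proposal follows essentially the same route as the paper: introduce the dual problem, write the first-order subdomain form of the dual solution, express $\|u-u_h\|_0^2$ through the bilinear form $A_h$, invoke the consistency relation \eqref{consistency} to replace the dual test triple by its interpolation errors $(\bm{U}-J_i\bm{U},\,w-I_iw,\,\mu-\pi_e\mu)$, and then apply Cauchy--Schwarz together with \eqref{approximation}, \eqref{trace2} and the regularity bound \eqref{regularity}. The one small inaccuracy in your bookkeeping is the pairing of the interface jump $\jump{u_h}$: in the paper it is paired on $\mathcal{T}^{\Gamma,h}$ with $\mu-\pi_e\mu$ (the $L^2$-projection error of the dual Lagrange multiplier, controlled by $h_e^{1/2}\|\mu\|_{1/2,e}$ via \eqref{approximation} and \eqref{trace2}), and on $\mathcal{F}_p$ with $(\bm{U}-J_i\bm{U})\cdot\bm{n}$ --- not with $w-\pi_e w$ --- but this is a detail that does not affect the substance of the argument.
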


\begin{proof}
Assume the auxiliary problem
\begin{equation}
\begin{split}
-\nabla \cdot (\rho\nabla w)&=u-u_h\quad \mbox{in}\; \Omega,\\
w&=0\hspace{1.2cm} \mbox{on}\; \partial \Omega
\end{split}
\label{duality}
\end{equation}
satisfies the elliptic regularity estimate
\begin{align}
\|w\|_2\leq C \|u-u_h\|_0.\label{regularity}
\end{align}
Let $\bm{U}=-\rho \nabla w$ and $\bm{U}_i\cdot \bm{n}_i=\mu_i$ on $\partial \Omega_i\setminus \partial \Omega$, then
\eqref{duality} can be recast into the following first order system
\begin{align*}
\bm{U}&=-\rho \nabla w\quad \mbox{in}\; \Omega,\\
\nabla \cdot \bm{U}&=u-u_h\quad \mbox{in}\; \Omega,\\
w&=0\hspace{1.2cm}\mbox{on}\; \partial \Omega.
\end{align*}
Notice that the above problem is equivalent to the following subdomain problems
\begin{equation}
\begin{split}
\bm{U}_i&=-\rho_i\nabla w_i \;\; \mbox{in}\; \Omega_i,\\
\nabla \cdot \bm{U}_i&=u-u_h\quad\;\mbox{in}\; \Omega_i,\\
\bm{U}_i\cdot \bm{n}_i&=\mu_i\hspace{1.1cm} \mbox{on} \;\partial \Omega_i\backslash \partial \Omega,\\
w_i&=0 \hspace{1.3cm} \mbox{on}\; \partial \Omega_i\cap \partial \Omega,\\
\mu_i+\mu_j&=0 \hspace{1.3cm}\mbox{for all }\; (i,j)\in S,
\end{split}
\label{sub-domain}
\end{equation}
with the additional condition that $w_i$ are continuous across the subdomain interfaces.

Multiplying the first equation of \eqref{sub-domain} by $\bm{z}-\bm{z}_h$, the second equation by $u-u_h$ and integrating over $\Omega$ to get
\begin{align*}
\|u-u_h\|_0^2&=\sum_{i=1}^N\Big((\nabla\cdot \bm{U}, u-u_h)_{\Omega_i}+\rho_i^{-1}(\bm{U}, \bm{z}-\bm{z}_h)_{\Omega_i}+(\nabla w, \bm{z}-\bm{z}_h)_{\Omega_i}\\
&\;-(w, (\lambda-\lambda_h)\bm{n}_i\cdot \bm{n}_{ij})_{\Gamma_i}\Big).
\end{align*}
Integration by parts, employing the definition of $A_h$ (cf. \eqref{global-discrete}) and using $\bm{U}_i\cdot \bm{n}_i=\mu_i$ on $\partial \Omega_i\backslash \partial \Omega$ yield
\begin{align*}
\|u-u_h\|_0^2&=\sum_{i=1}^N\Big((\bm{U}\cdot \bm{n}_i, u-u_h)_{\Gamma_i}-\sum_{e\in \mathcal{F}_{p,i}} (\bm{U}\cdot \bm{n}, \jump{u_h})_e-(\bm{U}, \nabla (u-u_h))_{\Omega_i}+\rho_i^{-1}(\bm{U}, \bm{z}-\bm{z}_h)_{\Omega_i}\\
&\;+(\nabla w, \bm{z}-\bm{z}_h)_{\Omega_i}-(w, (\lambda-\lambda_h)\bm{n}_i\cdot \bm{n}_{ij})_{\Gamma_i}\Big)\\
&=A_h(\bm{z}, u, \lambda;\bm{U}, w, \mu)-A_h(\bm{z}_h, u_h, \lambda_h; \bm{U}, w, \mu)\\
&=A_h(\bm{z}-\bm{z}_h, u-u_h, \lambda-\lambda_h;\bm{U},w, \mu).
\end{align*}
Employing \eqref{global-discrete} and \eqref{consistency}, we deduce that
\begin{align*}
\|u-u_h\|_0^2&=A_h(\bm{z}-\bm{z}_h, u-u_h, \lambda-\lambda_h;\bm{U}-J_i\bm{U},w-I_iw, \mu-\pi_e \mu)\\
&=\sum_{\tau\in \mathcal{T}}\Big((f+\nabla \cdot \bm{z}_h, w-I_iw)_\tau-(\rho^{-1}\bm{z}_h-\nabla u_h, \bm{U}-J_i\bm{U})_\tau\\
&\;-\sum_{e\in \mathcal{F}_{p}} ( \jump{u_h},(\bm{U}-J_i\bm{U})\cdot \bm{n})_e-\sum_{e\in \mathcal{F}_{u}^0}(\jump{\bm{z}_h\cdot \bm{n}},w-I_iw)_e\\
&\;+ \sum_{i=1}^N\Big((\lambda_h\bm{n}_i\cdot \bm{n}_{ij}-\bm{z}_h\cdot \bm{n}_i,w-I_iw)_{\Gamma_i}- (u_h\bm{n}_i\cdot \bm{n}_{ij}, \mu-\pi_e \mu)_{\Gamma_i}\Big).
\end{align*}
The Cauchy-Schwarz inequality and the approximation properties \eqref{approximation} imply
\begin{align*}
\|u-u_h\|_0^2&\leq C\Big( \sum_{\tau\in \mathcal{T}}
(h_\tau^2\|f+\nabla \cdot \bm{z}_h\|_{0,\tau}\|w\|_{2,\tau}+h_\tau\|\rho^{-1}\bm{z}_h-\nabla u_h\|_{0,\tau}\|\bm{U}\|_{1,\tau}\\
&\;+\|\lambda_h\bm{n}_{i}\cdot\bm{n}_{ij}-
\bm{z}_h\cdot\bm{n}_i\|_{0,\partial \tau\cap \Gamma}h_e^{3/2}\|w\|_{2,\tau})+\sum_{e\in \mathcal{F}_p}h_e^{1/2}\|\jump{u_h}\|_{0,e}\|\bm{U}\|_{1,D(e)}\\
&\;+\sum_{e\in \mathcal{F}_u^0}h_e^{3/2}
\|\jump{\bm{z}_h\cdot\bm{n}}\|_{0,e}\|w\|_{2,D(e)}+\sum_{e\in \mathcal{T}^{\Gamma,h}}h_e^{1/2}\|\jump{u_h}\|_{0,e}
\|\mu\|_{1/2,e}\Big).
\end{align*}
This together with \eqref{trace2} and the elliptic regularity estimate \eqref{regularity} completes the proof.

\end{proof}

\subsection{Energy error estimator}

This section is devoted to the construction of the error
estimator in energy error, the primary ingredient is to define an auxiliary function which enables us to decompose the error into conforming part and nonconforming part. To this end, we first introduce the following lemma, which provides the upper bound for the nonconforming error.

Following Lemma~3.6 of \cite{WangXu} and Theorem~2.2 of \cite{KPDG}, we have
\begin{lemma}\label{lemma:nonconforming}
There exists a positive constant $C$ independent of the mesh size such that
\begin{align*}
\min_{v\in H^1_0(\Omega)}\|\rho^{\frac{1}{2}}\nabla (v-u_h)\|_0\leq C \Big(\sum_{\mathcal{F}_p\cup \mathcal{T}^{\Gamma,h} } h_e^{-1}\|\jump{\rho^{\frac{1}{2}}u_h}\|_{0,e}^2\Big)^{1/2}.
\end{align*}

\end{lemma}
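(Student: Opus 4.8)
The plan is to exhibit a good competitor $v\in H^1_0(\Omega)$ by an enriching (nodal averaging) procedure, carried out first inside each subdomain and then reconciled across the nonmatching interfaces, in the spirit of Theorem~2.2 of \cite{KPDG} and Lemma~3.6 of \cite{WangXu}. Since $\rho$ is constant on each $\Omega_i$, the weight only rescales things subdomainwise, so throughout it suffices to track the unweighted quantities on each $\Omega_i$ and restore the factors $\rho_i^{1/2}$ at the end; in particular, on $e\in\mathcal F_{p,i}$ one has $\jump{\rho^{1/2}u_h}=\rho_i^{1/2}\jump{u_h}$.

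\emph{Interior jumps.} Fix $\Omega_i$. The restriction $u_h|_{\Omega_i}$ is a piecewise polynomial on the subdivided mesh $\mathcal T_i$ that is already continuous across every $e\in\mathcal F_{u,i}$ and can jump only across $e\in\mathcal F_{p,i}$. Applying the enriching operator of \cite{KPDG} on $\mathcal T_i$ — nodal averaging over adjacent triangles, with zero assigned to the nodes on $\partial\Omega\cap\partial\Omega_i$ — yields $\tilde v_i\in V_{h_i}^0\cap H^1(\Omega_i)$ with
\[
\sum_{\tau\in\mathcal T_i}\Big(\|\nabla(\tilde v_i-u_h)\|_{0,\tau}^2+h_\tau^{-2}\|\tilde v_i-u_h\|_{0,\tau}^2\Big)\le C\sum_{e\in\mathcal F_{p,i}}h_e^{-1}\|\jump{u_h}\|_{0,e}^2 .
\]
Combined with the trace inequality \eqref{trace}, the $L^2$ term also bounds $\|\tilde v_i-u_h\|_{0,e'}$ for $e'\subset\Gamma_i$ by the $\mathcal F_{p,i}$-jumps adjacent to $e'$. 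Reinstating $\rho_i$, this already reproduces the $\mathcal F_p$-part of the asserted right-hand side.

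\emph{Interface reconciliation.} The family $(\tilde v_i)_i$ fails to lie in $H^1_0(\Omega)$ only because on each $\Gamma_{ij}$ the traces disagree; writing $\Omega_i$ for the non-mortar side and $\pi_{ij}$ for the $L^2(\Gamma_{ij})$-projection onto $M_{ij}$, the third equation of \eqref{discrete} amounts to $u_h|_{\Omega_i}=\pi_{ij}(u_h|_{\Omega_j})$ on $\Gamma_{ij}$, so $\jump{u_h}|_{\Gamma_{ij}}=-(I-\pi_{ij})(u_h|_{\Omega_j})$ is $L^2(\Gamma_{ij})$-orthogonal to $u_h|_{\Omega_i}|_{\Gamma_{ij}}\in M_{ij}$. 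Expanding $\jump{\rho^{1/2}u_h}|_{\Gamma_{ij}}=(\rho_i^{1/2}-\rho_j^{1/2})\,u_h|_{\Omega_i}+\rho_j^{1/2}\jump{u_h}$ and using this orthogonality gives
\[
\rho_j\,\|\jump{u_h}\|_{0,\Gamma_{ij}}^2\le\|\jump{\rho^{1/2}u_h}\|_{0,\Gamma_{ij}}^2 ,
\]
which is precisely why the weighted jump over the non-mortar mesh $\mathcal T^{\Gamma,h}$ is the right quantity. I then correct one of the two traces (say $\tilde v_j$) by an $H^1$ function supported in a thin layer of elements adjacent to $\Gamma_{ij}$, built on the non-mortar mesh, which removes the trace mismatch, vanishes on the rest of $\partial\Omega_j$ (in particular at the endpoints of $\Gamma_{ij}$), and has energy $\le C\big(\sum_{e\in\mathcal T^{\Gamma,h},\,e\subset\Gamma_{ij}}h_e^{-1}\|\jump{u_h}\|_{0,e}^2+\text{nearby }\mathcal F_p\text{-terms}\big)$; here the mismatch has first been split, via the interior estimates above, into $(\tilde v_i-u_h|_{\Omega_i})|_{\Gamma_{ij}}-(\tilde v_j-u_h|_{\Omega_j})|_{\Gamma_{ij}}+\jump{u_h}|_{\Gamma_{ij}}$. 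Defining $v$ by $\tilde v_i$ on each $\Omega_i$ with these corrections added on the interface layers, one checks $v\in H^1_0(\Omega)$, and summing the interior and interface bounds over all $\Omega_i$ and all $\Gamma_{ij}$, then restoring the weights, yields $\|\rho^{1/2}\nabla(v-u_h)\|_0^2\le C\sum_{e\in\mathcal F_p\cup\mathcal T^{\Gamma,h}}h_e^{-1}\|\jump{\rho^{1/2}u_h}\|_{0,e}^2$, which proves the claim since the left side bounds the minimum.

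The main obstacle is the interface reconciliation: because $T_{ij,i}$ and $T_{ij,j}$ are nonmatching, the trace mismatch is only piecewise polynomial with respect to their common refinement, so a crude lifting would contaminate the bound with full $H^{1/2}$-norms of $u_h$; the correction has to be constructed so that its energy is controlled \emph{exactly} by $\sum_{e\in\mathcal T^{\Gamma,h}}h_e^{-1}\|\jump{u_h}\|_{0,e}^2$ with the correct powers of $h_e$, and the indispensable structural input for this is the mortar orthogonality $u_h|_{\Omega_i}=\pi_{ij}(u_h|_{\Omega_j})$ on $\Gamma_{ij}$, which also explains the appearance of the $\rho^{1/2}$-weighted jump. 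Everything else — the interior enriching estimate, the trace inequalities, and keeping $v$ globally in $H^1_0(\Omega)$ across all interface edges and crosspoints — is standard bookkeeping, following \cite{KPDG,WangXu}.
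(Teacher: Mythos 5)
Since the paper supplies no proof of this lemma (it simply defers to Lemma~3.6 of \cite{WangXu} and Theorem~2.2 of \cite{KPDG}), your proposal is essentially a reconstruction of the argument those references provide: subdomain-wise conforming enrichment controlled by the $\mathcal{F}_p$-jumps, followed by an interface correction controlled by the non-mortar jumps, with the mortar $L^2$-orthogonality $u_h|_{\Omega_i}=\pi_{ij}(u_h|_{\Omega_j})$ on $\Gamma_{ij}$ correctly identified as the structural input that makes the $\rho^{1/2}$-weighted jump the right quantity; this matches the paper's route. The one imprecision is asking the interface correction both to remove the full trace mismatch on $\Gamma_{ij}$ and to vanish at the endpoints of $\Gamma_{ij}$ — these conflict whenever the mismatch is nonzero at a crosspoint — but that is exactly the crosspoint bookkeeping in \cite{KPDG,WangXu} you explicitly defer to.
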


Let $\Pi_h: H^1(\Omega_i)\rightarrow X_h(\Omega_i)$ be the well-known Scott-Zhang local quasi-interpolation operator, where $X_h(\Omega_i)$ is $P^1$ conforming element space in each subdomain $\Omega_i$. In addition, $\Pi_h$ satisfies the following interpolation error estimates (cf. \cite{ScottZhang90}).
\begin{lemma}\label{lemma:error-Ih}
We have the following interpolation error estimates for $v\in H^1_0(\Omega)$
\begin{align*}
h_{\tau}^{-2}\|v-\Pi_hv\|_{0,\tau}^2&\leq C |v|_{1,\omega_\tau} \quad \tau\in\mathcal{T}_i,\\
h_e^{-1}\|v-\Pi_h v\|_{0,e}^2&\leq C |v|_{1,\omega_e}^2\quad e\in \mathcal{F}_{u,i}\cup \mathcal{E}_h(\bar{\Omega}_i),
\end{align*}
where $\mathcal{E}_h(\bar{\Omega}_i)$ denotes all the edges of $\mathcal{T}_i$ in $\bar{\Omega}_i$, and $\omega_\tau, \omega_e$ denotes the union of all the elements in $\mathcal{T}_{i}$ sharing at least a node with $\tau$ and $e$, respectively.
\end{lemma}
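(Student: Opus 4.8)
This is the classical Scott--Zhang estimate, so the plan is essentially to recall the construction of $\Pi_h$ and exploit its two defining features: local $L^2$-stability on vertex/edge patches, and exact reproduction of $P^1$ on each such patch. Both bounds then follow from a Bramble--Hilbert/Poincar\'e scaling argument carried out entirely inside a single subdomain $\Omega_i$, so that no global conformity across subdomain interfaces is ever invoked. I would first fix $\tau\in\mathcal{T}_i$ and recall that each nodal functional of $\Pi_h$ attached to a vertex of $\tau$ is an $L^2$-inner product against a bounded dual basis function supported on some simplex (or, for a boundary vertex, some boundary face) contained in $\omega_\tau$. This gives the local stability $\|\Pi_h w\|_{0,\tau}\le C\|w\|_{0,\omega_\tau}$ with $C$ depending only on the shape regularity of $\mathcal{T}_i$, which the centroid-based subdivision guarantees. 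Since $\Pi_h$ restricted to $\omega_\tau$ is the identity on $P^1$, for any affine $p$ one has $v-\Pi_h v=(v-p)-\Pi_h(v-p)$ on $\tau$, hence $\|v-\Pi_h v\|_{0,\tau}\le (1+C)\,\|v-p\|_{0,\omega_\tau}$; choosing $p$ to be the local $P^1$-projection (or average) of $v$ on the connected patch $\omega_\tau$ and applying the Poincar\'e inequality, scaled by $h_\tau$ via shape regularity, yields $\|v-\Pi_h v\|_{0,\tau}\le C h_\tau |v|_{1,\omega_\tau}$, which squared is the first estimate.

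For the edge estimate I would apply the scaled trace inequality \eqref{trace} on an element $\tau$ containing $e$ to bound $\|v-\Pi_h v\|_{0,e}$ by $h_\tau^{-1/2}\|v-\Pi_h v\|_{0,\tau}+h_\tau^{1/2}\|\nabla(v-\Pi_h v)\|_{0,\tau}$; the first term is controlled by the estimate just proved, and for the gradient term I would use, in addition, the local $H^1$-stability $\|\nabla\Pi_h w\|_{0,\tau}\le C h_\tau^{-1}\|w\|_{0,\omega_\tau}+C|w|_{1,\omega_\tau}$ together with the same $P^1$-reproduction trick, producing $\|\nabla(v-\Pi_h v)\|_{0,\tau}\le C|v|_{1,\omega_\tau}$. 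Combining the two gives $\|v-\Pi_h v\|_{0,e}\le C h_e^{1/2}|v|_{1,\omega_e}$, i.e.\ the second estimate after squaring. (Equivalently, one can invoke directly the face-based Scott--Zhang stability $\|\Pi_h w\|_{0,e}\le C\bigl(h_e^{-1/2}\|w\|_{0,\omega_e}+h_e^{1/2}|w|_{1,\omega_e}\bigr)$ and the trace estimate applied to $v-p$.) For $v\in H^1_0(\Omega)$ the dual faces for boundary vertices are chosen on $\partial\Omega$, so $\Pi_h v$ vanishes there and the estimates hold uniformly up to $\partial\Omega$; near a subdomain interface nothing special is needed since $\Pi_h$ acts subdomain-wise and $\omega_\tau,\omega_e\subset\bar{\Omega}_i$.

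The only genuine obstacle is the bookkeeping needed to ensure that the constants in the local stability bounds and in the patch Poincar\'e inequality depend solely on the shape regularity of the refined triangulation $\mathcal{T}_i$, and not on $h_i$ or on the number of subdomains $N$; this is precisely the content of \cite{ScottZhang90}. Accordingly, in the paper I would present the argument in the compressed form of noting that all operations are confined to a single subdomain and then citing \cite{ScottZhang90} for the stated bounds.
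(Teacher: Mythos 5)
Your proposal is correct and is exactly the standard Scott--Zhang argument (local $L^2$-stability on vertex patches, $P^1$-reproduction plus Bramble--Hilbert, then the scaled trace inequality for the edge bound), which is precisely what the paper relies on: the paper gives no proof and simply cites \cite{ScottZhang90}, noting as you do that the operator is defined subdomain-wise so all patches $\omega_\tau,\omega_e$ stay inside $\bar\Omega_i$. One small remark: the first displayed estimate in the lemma as printed has $|v|_{1,\omega_\tau}$ unsquared on the right, which is a typo in the paper; your derivation correctly produces $|v|_{1,\omega_\tau}^2$ there.
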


We define the local error estimator on each element $\tau\in \mathcal{T}$ as
\begin{equation}
\begin{split}
\bar{\eta}_\tau^2&=h_\tau^2\rho^{-1}\|f+\nabla \cdot \bm{z}_h\|_{0,\tau}^2+\sum_{e\in \partial \tau\cap \mathcal{F}_u}h_e\rho^{-1}\|\jump{\bm{z}_h\cdot \bm{n}}\|_{0,e}^2+\|\rho^{-\frac{1}{2}}\bm{z}_h-\rho^{\frac{1}{2}}\nabla u_h\|_{0,\tau}^2\\
&\;+\sum_{e\in \partial \tau \cap \Gamma}h_e\rho^{-1}\|\lambda_h\bm{n}_i\cdot \bm{n}_{ij}-\bm{z}_h\cdot \bm{n}_i\|_{0,e}^2+\sum_{e\in \partial \tau \cap (\mathcal{T}^{\Gamma, h}\cup \mathcal{F}_p)}h_e^{-1}\rho\|\jump{u_h}\|_{0,e}^2.
\end{split}
\label{eta-local-flux}
\end{equation}
Then, the global error estimator in energy error can be defined as
\begin{align}
\eta_2^2=\sum_{\tau\in \mathcal{T}}\bar{\eta}_\tau^2.\label{eta-2}
\end{align}

\begin{theorem}
There exists a positive constant $C$ such that the following estimate holds
\begin{align*}
\|\rho^{\frac{1}{2}}\nabla (u-u_h)\|_{0}\leq C \eta_2.
\end{align*}

\end{theorem}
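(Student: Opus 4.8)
The plan is to introduce an auxiliary function $s\in H^1_0(\Omega)$ and to split the (broken) energy error as $\|\rho^{1/2}\nabla(u-u_h)\|_0\le\|\rho^{1/2}\nabla(u-s)\|_0+\|\rho^{1/2}\nabla(s-u_h)\|_0$, where on $u_h$ and $s-u_h$ the gradient is taken elementwise. I would take $s$ to be a minimizer realizing the bound in Lemma~\ref{lemma:nonconforming}, so that the \emph{nonconforming part} $\|\rho^{1/2}\nabla(s-u_h)\|_0$ is immediately controlled by $\big(\sum_{e\in\mathcal{F}_p\cup\mathcal{T}^{\Gamma,h}}h_e^{-1}\|\jump{\rho^{1/2}u_h}\|_{0,e}^2\big)^{1/2}$, hence (up to reconciling the $\rho$-weighting along $\Gamma$) by $\eta_2$. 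It then remains to control the \emph{conforming part} $\|\rho^{1/2}\nabla(u-s)\|_0$, and since $u-s\in H^1_0(\Omega)$ I would use duality: $\|\rho^{1/2}\nabla(u-s)\|_0=\sup\{(\rho\nabla(u-s),\nabla\phi):\phi\in H^1_0(\Omega),\ \|\rho^{1/2}\nabla\phi\|_0=1\}$.

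Fix such a $\phi$. The weak formulation gives $(\rho\nabla u,\nabla\phi)=(f,\phi)$, so $(\rho\nabla(u-s),\nabla\phi)=(f,\phi)-(\rho\nabla s,\nabla\phi)$, and I insert $\bm z_h$ and $u_h$ via $\rho\nabla s=\bm z_h+(\rho\nabla u_h-\bm z_h)-\rho\nabla(u_h-s)$. Integrating $(\bm z_h,\nabla\phi)$ by parts element by element, the contributions across edges of $\mathcal{F}_p$ vanish (since $\jump{\bm z_h\cdot\bm n}=0$ there and $\phi$ is single-valued), those across $\mathcal{F}_u^0$ produce jump terms in $\jump{\bm z_h\cdot\bm n}$, and on each $\Gamma_i$ the $\lambda_h$-parts cancel between the two adjacent subdomains (opposite outward normals, $\lambda_h$ single-valued), leaving the mortar flux difference $\bm z_h\cdot\bm n_i-\lambda_h\bm n_i\cdot\bm n_{ij}$. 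Altogether $(\rho\nabla(u-s),\nabla\phi)$ becomes the sum of an element residual term in $f+\nabla\cdot\bm z_h$, jump terms on $\mathcal{F}_u^0$, mortar flux-difference terms on $\Gamma$, the flux-consistency defect $(\bm z_h-\rho\nabla u_h,\nabla\phi)$, and $(\rho\nabla(u_h-s),\nabla\phi)$.

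The crucial step is discrete consistency via the Scott–Zhang operator: on each $\Omega_i$, $\Pi_h\phi$ is a conforming piecewise-linear function vanishing on $\partial\Omega\cap\partial\Omega_i$, hence lies in $V_{h_i}^0$ (for $k\ge1$); testing the second equation of \eqref{discrete} with $v=\Pi_h\phi$ and integrating by parts (again the $\mathcal{F}_{p,i}$ terms drop) shows that the element-residual, $\mathcal{F}_u^0$-jump and mortar flux-difference groups, evaluated at $\Pi_h\phi$ instead of $\phi$, sum to zero. Subtracting this, I may replace $\phi$ by $\phi-\Pi_h\phi$ in those three groups; then Cauchy–Schwarz together with the Scott–Zhang estimates of Lemma~\ref{lemma:error-Ih} ($\|\phi-\Pi_h\phi\|_{0,\tau}\le Ch_\tau|\phi|_{1,\omega_\tau}$ and $\|\phi-\Pi_h\phi\|_{0,e}\le Ch_e^{1/2}|\phi|_{1,\omega_e}$), the trace inequality \eqref{trace} on the interface terms, and a finite-overlap argument (valid since each Scott–Zhang patch stays inside one subdomain, where $\rho$ is constant) bound them by the first four groups of $\bar\eta_\tau^2$ times $\|\rho^{1/2}\nabla\phi\|_0$. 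The remaining term $(\rho\nabla(u_h-s),\nabla\phi)$ is estimated by $\|\rho^{1/2}\nabla(u_h-s)\|_0\le C\eta_2$ through Lemma~\ref{lemma:nonconforming}. Taking the supremum over $\phi$ and recombining with the nonconforming part gives $\|\rho^{1/2}\nabla(u-u_h)\|_0\le C\eta_2$.

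I expect the main obstacle to be the interface bookkeeping: confirming that the subdomain-wise (hence globally discontinuous across $\Gamma$) interpolant $\Pi_h\phi$ is a legitimate test function so that the discrete equations annihilate the interpolated part, verifying the cancellation of the $\lambda_h$-terms in the elementwise integration by parts of $(\bm z_h,\nabla\phi)_{\Omega_i}$, and reconciling the $\rho$-weights in $\jump{\rho^{1/2}u_h}$ coming from Lemma~\ref{lemma:nonconforming} with those in the last term of $\bar\eta_\tau^2$ on interface edges where $\rho$ jumps.
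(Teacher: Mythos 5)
Your proposal reproduces the paper's own proof: the auxiliary function $s$ you take (the minimizer in Lemma~\ref{lemma:nonconforming}) is exactly the paper's $s$ defined by $(\rho\nabla s,\nabla v)=(\rho\nabla u_h,\nabla v)$ for all $v\in H^1_0(\Omega)$, and the conforming/nonconforming split, the duality characterization of $\|\rho^{1/2}\nabla(u-s)\|_0$, the Scott--Zhang operator, and the use of the second equation of \eqref{discrete} with $v=\Pi_h\phi$ to annihilate the interpolated part are identical. The only (harmless) redundancy is that the Euler--Lagrange equation for the minimizer $s$ makes $(\rho\nabla(u_h-s),\nabla\phi)=0$ for every $\phi\in H^1_0(\Omega)$, so that term vanishes identically rather than needing a separate bound, and correspondingly the two pieces of your splitting are $\rho$-orthogonal, which is how the paper gets the Pythagorean equality in place of your triangle inequality.
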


\begin{proof}

We first define a function $s\in H^1_0(\Omega)$ such that
\begin{align}
(\rho \nabla s,\nabla v)=(\rho\nabla u_h, \nabla v) \quad \forall v\in H^1_0(\Omega),\label{eq:s}
\end{align}
where the existence and uniqueness of $s$ follow from Riesz representation theorem.

By taking $v=u-s$ in \eqref{eq:s}, we can get
\begin{align}
\|\rho^{\frac{1}{2}}\nabla (u-u_h)\|_0^2=\|\rho^{\frac{1}{2}}\nabla (u-s)\|_0^2+\|\rho^{\frac{1}{2}}\nabla (s-u_h)\|_0^2.\label{eq:errorde-composition}
\end{align}
We can first bound the second term by employing Lemma~\ref{lemma:nonconforming} yielding
\begin{align}
\|\rho^{\frac{1}{2}}\nabla (s-u_h)\|_0\leq C \Big(\sum_{\mathcal{F}_p\cup \mathcal{T}^{\Gamma,h} } h_e^{-1}\|\jump{\rho^{\frac{1}{2}}u_h}\|_{0,e}^2\Big)^{1/2}.\label{eq:term-nonconforming}
\end{align}
On the other hand, we have from the definition
\begin{align*}
\|\rho^{\frac{1}{2}}\nabla (u-s)\|_0&=\sup_{\psi\in H^1_0(\Omega)}\frac{(\rho \nabla (u-s), \nabla \psi)}{\|\rho^{1/2}\nabla \psi\|_0}\\
&=\sup_{\psi\in H^1_0(\Omega)}\frac{(\rho \nabla (u-u_h), \nabla \psi)}{\|\rho^{1/2}\nabla \psi\|_0}.
\end{align*}
Integration by parts implies
\begin{equation}
\begin{split}
(\rho\nabla (u-u_h), \nabla \psi)&=(\rho\nabla (u-u_h), \nabla (\psi-\Pi_h\psi))+(\rho\nabla (u-u_h), \nabla \Pi_h\psi)\\
&=\sum_{i=1}^N(\rho\nabla u\cdot \bm{n}, \psi-\Pi_h\psi)_{\Gamma_i}-(\rho\Delta u, \psi-\Pi_h\psi)-(\bm{z}_h, \nabla (\psi-\Pi_h\psi))\\
&\;+(\rho^{-\frac{1}{2}}\bm{z}_h-\rho^{\frac{1}{2}}\nabla u_h, \rho^{\frac{1}{2}}\nabla (\psi-\Pi_h\psi))+(\rho\nabla (u-u_h), \nabla \Pi_h\psi).
\end{split}
\label{eq:u-uh}
\end{equation}
The penultimate term of \eqref{eq:u-uh} can be estimated by integration by parts
\begin{align*}
(\bm{z}_h, \nabla (\psi-\Pi_h\psi))
&=\sum_{e\in \mathcal{F}_u^0}(\jump{\bm{z}_h\cdot \bm{n}}, \psi-\Pi_h\psi)_{e}+\sum_{i=1}^N(\bm{z}_h\cdot \bm{n}_i, \psi-\Pi_h\psi)_{\Gamma_i}-(\nabla \cdot \bm{z}_h, \psi-\Pi_h\psi).
\end{align*}
The last term of \eqref{eq:u-uh} can be recast into the following form by exploiting integration by parts and the second equation of \eqref{discrete}
\begin{align*}
(\rho\nabla (u-u_h), \nabla \Pi_h\psi)&=\sum_{i=1}^N(\rho\nabla u\cdot \bm{n}_i, \Pi_h\psi)_{\Gamma_{i}}-(\rho\Delta u, \Pi_h \psi)-(\rho\nabla u_h,\nabla \Pi_h\psi)\\
&=\sum_{i=1}^N(\rho\nabla u\cdot \bm{n}_i, \Pi_h\psi)_{\Gamma_{i}}+(f, \Pi_h\psi)-(\rho\nabla u_h,\nabla \Pi_h\psi)\\
&=\sum_{i=1}^N(\rho\nabla u\cdot \bm{n}_i, \Pi_h\psi)_{\Gamma_{i}}+\sum_{i=1}^N(b_i(\bm{z}_h, \Pi_h\psi)-c_i(\Pi_h\psi,\lambda_h))-(\rho\nabla u_h, \nabla \Pi_h\psi)\\
&=\sum_{i=1}^N(\rho\nabla u\cdot \bm{n}_i, \Pi_h\psi)_{\Gamma_{i}}+(\rho^{-\frac{1}{2}}\bm{z}_h-\rho^{\frac{1}{2}}\nabla u_h, \rho^{\frac{1}{2}}\nabla \Pi_h\psi)\\
&\;+\sum_{i=1}^N(\lambda_h\bm{n}_i\cdot\bm{n}_{ij}, \psi-\Pi_h\psi)_{\Gamma_i}.
\end{align*}
Finally, we can obtain by combing the above equations
\begin{align*}
(\rho\nabla (u-u_h), \nabla \psi)&=(\rho^{-\frac{1}{2}}(f+\nabla \cdot \bm{z}_h),\rho^{\frac{1}{2}}(\psi-\Pi_h\psi))+(\rho^{-\frac{1}{2}}\bm{z}_h-\rho^{\frac{1}{2}}\nabla u_h, \rho^{\frac{1}{2}}\nabla \psi)\\
&\;
+\sum_{i=1}^N(\rho^{-\frac{1}{2}}(\lambda_h\bm{n}_i\cdot\bm{n}_{ij}-\bm{z}_h\cdot \bm{n}_i),\rho^{\frac{1}{2}}(\psi-\Pi_h\psi))_{\Gamma_i}\\
&\;-\sum_{e\in \mathcal{F}_{u}^0}(\jump{\rho^{-\frac{1}{2}}\bm{z}_h\cdot \bm{n}},\rho^{\frac{1}{2}}(\psi-\Pi_h\psi))_e,
\end{align*}
which, coupling with \eqref{eq:errorde-composition}, \eqref{eq:term-nonconforming} and Lemma~\ref{lemma:error-Ih} yields the desired estimate.

\end{proof}

\section{Efficiency}\label{Sec:efficiency}
This section is devoted to establishing the lower bounds on the errors. To this end, we set the element bubble function for each element $\tau$ as $\psi_\tau$ and the edge bubble function for each edge $e$ as $\psi_e$. The properties of the bubble functions are given in the next lemma.
\begin{lemma}\label{bubblel}
The following inequalities hold for all functions $v\in P^k(\tau)$.
\begin{align}
&\|v\|_{0,\tau}\leq C \|\psi_{\tau}^{1/2}v\|_{0,\tau}\leq C \|v\|_{0,\tau},\label{bubble1}\\
&\|\nabla (\psi_{\tau} v)\|_{0,\tau}\leq C h_{\tau}^{-1} \|v\|_{0,\tau}\label{bubble3}.
\end{align}
\end{lemma}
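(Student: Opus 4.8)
The plan is to reduce both statements to the reference element by an affine transformation and then invoke equivalence of norms on the finite-dimensional space of polynomials. Fix $\tau\in\mathcal{T}$ and let $F_\tau:\hat\tau\to\tau$, $\hat x\mapsto B_\tau\hat x+b_\tau$, be the affine map from a fixed reference triangle $\hat\tau$ onto $\tau$, where $\psi_{\hat\tau}$ is the standard (cubic) bubble on $\hat\tau$, so that $\psi_\tau=\psi_{\hat\tau}\circ F_\tau^{-1}$. Since $\mathcal{T}$ is quasi-uniform, hence shape regular, we have $\|B_\tau\|\le C h_\tau$, $\|B_\tau^{-1}\|\le C h_\tau^{-1}$ and $|\det B_\tau|\sim h_\tau^2$, with constants depending only on the shape-regularity parameter.

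For \eqref{bubble1}, the right inequality is immediate: $0\le\psi_\tau\le\|\psi_{\hat\tau}\|_{0,\infty,\hat\tau}=:C_0$, so $\|\psi_\tau^{1/2}v\|_{0,\tau}^2=\int_\tau\psi_\tau v^2\le C_0\|v\|_{0,\tau}^2$. For the left inequality, observe that on the finite-dimensional space $P^k(\hat\tau)$ the map $\hat v\mapsto\|\psi_{\hat\tau}^{1/2}\hat v\|_{0,\hat\tau}$ is a genuine norm, because $\psi_{\hat\tau}>0$ in the interior of $\hat\tau$ forces the polynomial $\hat v$ to vanish whenever $\psi_{\hat\tau}^{1/2}\hat v=0$ a.e. By equivalence of norms on $P^k(\hat\tau)$ there is a constant $\hat C$ with $\|\hat v\|_{0,\hat\tau}\le\hat C\|\psi_{\hat\tau}^{1/2}\hat v\|_{0,\hat\tau}$ for all $\hat v\in P^k(\hat\tau)$. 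Writing $\hat v=v\circ F_\tau$, the change of variables $x=F_\tau(\hat x)$ gives $\|v\|_{0,\tau}^2=|\det B_\tau|\,\|\hat v\|_{0,\hat\tau}^2$ and $\|\psi_\tau^{1/2}v\|_{0,\tau}^2=|\det B_\tau|\,\|\psi_{\hat\tau}^{1/2}\hat v\|_{0,\hat\tau}^2$; the Jacobian factor cancels, yielding $\|v\|_{0,\tau}\le\hat C\|\psi_\tau^{1/2}v\|_{0,\tau}$ with $\hat C$ independent of $\tau$.

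For \eqref{bubble3}, note that $\psi_\tau v$ is a polynomial on $\tau$ of degree at most $k+3$, a fixed number. Apply the standard inverse inequality $\|\nabla p\|_{0,\tau}\le C h_\tau^{-1}\|p\|_{0,\tau}$ valid for all $p\in P^{k+3}(\tau)$ to $p=\psi_\tau v$; this inverse estimate is itself obtained by the same scaling argument, since on $\hat\tau$ the gradient and $L^2$ norms are equivalent on the finite-dimensional space $P^{k+3}(\hat\tau)$, and transforming back produces exactly the factor $\|B_\tau^{-1}\|\le C h_\tau^{-1}$ together with a Jacobian factor that again cancels. Combining with $\|\psi_\tau v\|_{0,\tau}\le C_0^{1/2}\|\psi_\tau^{1/2}v\|_{0,\tau}\le C_0^{1/2}\|v\|_{0,\tau}$, which was just established, gives $\|\nabla(\psi_\tau v)\|_{0,\tau}\le C h_\tau^{-1}\|v\|_{0,\tau}$.

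The only point requiring care, and the nearest thing to an obstacle here, is ensuring that all constants are uniform in $\tau$: this rests entirely on the quasi-uniformity and shape regularity of $\mathcal{T}$, which supply the bounds on $B_\tau$, $B_\tau^{-1}$ and $\det B_\tau$ above and let a single reference configuration control every element simultaneously. No genuine analytic difficulty is involved; this is the classical Verf\"urth bubble-function lemma.
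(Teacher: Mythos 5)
Your proof is correct. The paper itself states Lemma~\ref{bubblel} without proof, treating it as the standard bubble-function result of Verf\"urth; your affine-scaling argument (norm equivalence on $P^k(\hat\tau)$ for the lower bound in \eqref{bubble1}, boundedness of $\psi_{\hat\tau}$ for the upper bound, and the inverse inequality for the fixed-degree polynomial $\psi_\tau v\in P^{k+3}(\tau)$ for \eqref{bubble3}) is exactly the canonical derivation the paper is implicitly invoking, and your attention to uniformity of the constants via shape regularity is the only point that needed care.
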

Moreover, there exists an extension operator $P_e$ that extends any function defined on $e\in \mathcal{F}$ to the element $\tau$ and satisfies
\begin{equation}
h_e^{1/2}\|\varphi\|_{0,e}\leq C \|\psi_e P_e \varphi\|_{0,\tau}\leq C h_e^{1/2} \|\varphi\|_{0,e} \quad\forall \varphi\in P^k(e).\label{extension}
\end{equation}

Then, the lower bounds on the errors can be stated in the next theorem.
\begin{theorem}
Let $\mathcal{T}_h$ be shape regular and let $f_h$ be the piecewise linear polynomial approximation of $f$. Then, there exists a positive constant $C$ independent of $h$ such that
\begin{equation}
\begin{split}
\eta_1^2&\leq C\Big(  \|u-u_h\|_{0}^2+\sum_{\tau\in \mathcal{T}}h_\tau^2\|\nabla (u-u_h)\|_{0,\tau}^2+\sum_{e\in \mathcal{T}^{\Gamma,h}}h_e^3\|\lambda-\lambda_h\|_{0,e}^2\\
&+\sum_{\tau\in \mathcal{T}}(h_\tau^4\|\nabla (\bm{z}-\bm{z}_h)\|_{0,\tau}^2+
h_\tau^2\|\bm{z}-\bm{z}_h\|_{0,\tau}^2
+h_\tau^4\|f-f_h\|_{0,\tau}^2)\Big)
\end{split}
\label{eff-eta1}
\end{equation}
and
\begin{equation*}
\begin{split}
\eta_2^2&\leq C\Big( \|\rho^{-\frac{1}{2}}(\bm{z}-\bm{z}_h)\|_0^2+\sum_{\tau\in \mathcal{T}}\Big(h_\tau^2\|\rho^{-\frac{1}{2}}\nabla (\bm{z}-\bm{z}_h)\|_{0,\tau}^2+h_\tau^2\|\rho^{-\frac{1}{2}}(f-f_h)\|_{0,\tau}^2\\
&\;+h_\tau^{-2}\|\rho^{\frac{1}{2}}(u-u_h)\|_{0,\tau}^2
+\|\rho^{\frac{1}{2}}\nabla(u-u_h)\|_{0,\tau}^2\Big)+
\sum_{e\in\mathcal{T}^{\Gamma,h}}
h_e\|\rho^{-\frac{1}{2}}(\lambda-\lambda_h)\|_{0,e}^2\Big).
\end{split}
\end{equation*}
In addition, the following local bounds hold for any $\tau\in \mathcal{T}_h$,$e\in \partial \tau$ and $\hat{e}\in \mathcal{F}_p\cup \mathcal{T}^{\Gamma,h}$
\begin{align*}
h_\tau^2\|f+\nabla \cdot\bm{z}_h\|_{0,\tau}^2+\|\bm{z}_h-\nabla u_h\|_{0,\tau}^2&\leq C\Big(\|\bm{z}-\bm{z}_h\|_{0,\tau}^2+\|\nabla (u-u_h)\|_{0,\tau}^2+h_\tau^2\|f-f_h\|_{0,\tau}^2\Big),\\
h_e^{1/2}\|\jump{\bm{z}_h\cdot \bm{n}}\|_{0,e}&\leq C\Big( \|\bm{z}-\bm{z}_h\|_{0,D(e)}+(\sum_{\tau\in D(e)}h_\tau^2\|f-f_h\|_{0,\tau}^2)^{1/2}\Big),\\
h_e^{1/2}\|\lambda_h\bm{n}_i\cdot\bm{n}_{ij}-\bm{z}_h\cdot\bm{n}_i\|_{0,e}&\leq C\Big(h_e^{1/2}\|\lambda-\lambda_h\|_{0,e}+\|\bm{z}-\bm{z}_h\|_{0,\tau}+h_\tau \|\nabla (\bm{z}-\bm{z}_h)\|_{0,\tau}\Big)
\end{align*}
and
\begin{align*}
\|\jump{u_h}\|_{0,\hat{e}}\leq C \sum_{\tau\in D(\hat{e})}\Big( h_\tau^{-1/2}\|u-u_h\|_{0,\tau}+h_\tau^{1/2}\|\nabla (u-u_h)\|_{0,\tau}\Big).
\end{align*}

\end{theorem}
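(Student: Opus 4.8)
The plan is to prove the efficiency (lower bound) statement by establishing the local bounds one term at a time using the standard bubble-function technique of Verf\"urth, and then summing them to recover the two global bounds for $\eta_1$ and $\eta_2$. The local bounds listed in the theorem are the real content, so I would organize the proof around them.

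\textbf{Element residual and consistency term.} First I would bound $h_\tau^2\|f+\nabla\cdot\bm{z}_h\|_{0,\tau}^2+\|\bm{z}_h-\nabla u_h\|_{0,\tau}^2$ on each $\tau$. Replace $f$ by its piecewise linear approximation $f_h$ (this generates the data oscillation term $h_\tau^2\|f-f_h\|_{0,\tau}^2$), multiply $f_h+\nabla\cdot\bm{z}_h$ by the element bubble $\psi_\tau(f_h+\nabla\cdot\bm{z}_h)$, use \eqref{bubble1}, integrate by parts over $\tau$ (the bubble vanishes on $\partial\tau$, so no boundary terms), and bring in $-\nabla\cdot\bm{z}=f$ plus the inverse estimate \eqref{bubble3}; this yields $h_\tau\|f_h+\nabla\cdot\bm{z}_h\|_{0,\tau}\le C(\|\bm{z}-\bm{z}_h\|_{0,\tau}+h_\tau\|f-f_h\|_{0,\tau})$. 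For the term $\|\bm{z}_h-\nabla u_h\|_{0,\tau}$, note $\bm{z}=\nabla u$ (taking $\rho\equiv$ const on $\tau$, or absorbing $\rho$), so $\bm{z}_h-\nabla u_h=(\bm{z}_h-\bm{z})+\nabla(u-u_h)$ and the triangle inequality suffices.

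\textbf{Edge jump terms.} For $h_e^{1/2}\|\jump{\bm{z}_h\cdot\bm{n}}\|_{0,e}$ with $e\in\mathcal{F}_u^0$, use the edge bubble $\psi_e$, extend $\jump{\bm{z}_h\cdot\bm{n}}$ via $P_e$ to $D(e)$, test, apply \eqref{extension}, integrate by parts on each of the two triangles in $D(e)$ (using that $\bm{z}\cdot\bm{n}$ is single-valued so $\jump{\bm{z}\cdot\bm{n}}=0$), and absorb the resulting volume integrals with the already-bounded element residual and $\|\bm{z}-\bm{z}_h\|_{0,D(e)}$. For the interface term $h_e^{1/2}\|\lambda_h\bm{n}_i\cdot\bm{n}_{ij}-\bm{z}_h\cdot\bm{n}_i\|_{0,e}$, insert $\lambda=\bm{z}\cdot\bm{n}_i$ (the exact flux identity from \eqref{weak}), write the difference as $(\lambda_h-\lambda)\bm{n}_i\cdot\bm{n}_{ij}+(\bm{z}-\bm{z}_h)\cdot\bm{n}_i$, bound the first piece by $h_e^{1/2}\|\lambda-\lambda_h\|_{0,e}$ directly, and for the second use the trace inequality \eqref{trace} (which is why $h_\tau\|\nabla(\bm{z}-\bm{z}_h)\|_{0,\tau}$ appears). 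For the solution jump $\|\jump{u_h}\|_{0,\hat{e}}$, use that $u\in H^1_0(\Omega)$ restricted to each subdomain is continuous across $\mathcal{F}_p$ and satisfies the mortar matching across $\mathcal{T}^{\Gamma,h}$, so $\jump{u_h}=\jump{u_h-u}$ (plus, on $\mathcal{T}^{\Gamma,h}$, a mortar-projection term that integrates against $M_{ij}$ and can be handled by the $L^2$-orthogonality) and apply the trace inequality \eqref{trace} on each triangle of $D(\hat{e})$.

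\textbf{Assembling the global bounds.} Summing the local bounds over $\tau\in\mathcal{T}$ and $e$, using finite overlap of the patches $D(e)$ and shape regularity, gives the global estimate for $\eta_1^2$; scaling each term by the appropriate power of $\rho$ (which is piecewise constant, hence harmless) and the corresponding power of $h$ gives $\eta_2^2$. The main obstacle I anticipate is the treatment of the jump $\|\jump{u_h}\|_{0,\hat{e}}$ on the non-mortar edges $\mathcal{T}^{\Gamma,h}$: unlike the conforming-grid case, $\jump{u_h}$ there is only $L^2$-orthogonal to $M_{ij}$ rather than pointwise zero, so I would need to split $\jump{u_h}=\jump{u_h-u}$ and control it via the trace inequality while carefully accounting for the nonmatching meshes on the two sides of $\hat{e}$ — the patch $D(\hat{e})$ must be understood as the union of simplicial submeshes on \emph{both} subdomains touching $\hat{e}$, and one must avoid any hidden dependence on the ratio of the two mesh sizes. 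A secondary nuisance is keeping the $\rho$-weights consistent between $\eta_1$ and $\eta_2$, but since $\rho$ is piecewise constant this is purely bookkeeping.
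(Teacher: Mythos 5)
Your proposal is essentially the paper's proof: element bubble for the volume residual, edge bubble with the extension $P_e$ for the normal flux jump, triangle inequality plus the trace inequality \eqref{trace} for the mortar-flux mismatch and for $\bm{z}_h-\nabla u_h$, and the identity $\jump{u_h}=\jump{u_h-u}$ combined with \eqref{trace} for the solution jump, all assembled by finite overlap of the patches. One small clarification on the point you flagged as the main obstacle: no mortar-projection term ever appears for $\jump{u_h}$ on $\mathcal{T}^{\Gamma,h}$, since the exact solution $u\in H^1_0(\Omega)$ has $\jump{u}=0$ pointwise a.e.\ across $\Gamma$, so $\jump{u_h}=\jump{u_h-u}$ holds identically and the trace inequality on the two-sided patch $D(\hat e)$ closes the argument without any appeal to $L^2$-orthogonality against $M_{ij}$.
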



\begin{proof}
Let $R_\tau(f)=f+\nabla \cdot \bm{z}_h$. Green's theorem and the Cauchy-Schwarz inequality imply
\begin{align*}
(R_\tau(f_h), \psi_\tau R_\tau(f_h))_{\tau}&=(-\nabla \cdot (\bm{z}-\bm{z}_h), \psi_\tau R_\tau(f_h))_\tau+(f_h-f, \psi_\tau R_\tau(f_h))_\tau\\
&=(\bm{z}-\bm{z}_h,
\nabla( \psi_\tau R_\tau(f_h)))_\tau+(f_h-f, \psi_\tau R_\tau(f_h))_\tau\\
&\leq C ( \|\bm{z}-\bm{z}_h\|_{0,\tau}
\|\nabla (\psi_\tau R_\tau(f_h))\|_{0,\tau}+\|f-f_h\|_{0,\tau}\|\psi_\tau R_\tau(f_h)\|_{0,\tau})\\
&\leq C (\|\bm{z}-\bm{z}_h\|_{0,\tau}
h_{\tau}^{-1}\|R_\tau(f_h)\|_{0,\tau}
+\|f-f_h\|_{0,\tau}\| R_\tau(f_h)\|_{0,\tau}),
\end{align*}
where in the last inequality, we use \eqref{bubble1} and \eqref{bubble3}.

Combining the above inequality with \eqref{bubble1}, we can achieve
\begin{eqnarray*}
\|R_\tau(f_h)\|_{0,\tau}^2\leq C ( \|\bm{z}-\bm{z}_h\|_{0,\tau}h_\tau^{-1} \|R_\tau(f_h)\|_{0,\tau}+\|f-f_h\|_{0,\tau} \|R_\tau(f_h)\|_{0,\tau}),
\end{eqnarray*}
which yields
\begin{eqnarray}
h_\tau \|R_\tau(f)\|_{0,\tau}\leq C(\|
\bm{z}-\bm{z}_h\|_{0,\tau}+h_\tau \|f-f_h\|_{0,\tau}).\label{eff-fh}
\end{eqnarray}
Next, fix an edge $e\in \mathcal{F}_{u}^0$, for any $w=\psi_eP_e\jump{\bm{z}_h\cdot \bm{n}}\in H^1_0(D(e))$, we have from integration by parts
\begin{align*}
(\jump{\bm{z}_h}\cdot \bm{n}_e, \psi_eP_e\jump{\bm{z}_h\cdot \bm{n}})_e&=\sum_{\tau\in D(e)}((\bm{z}_h-\bm{z})\cdot \bm{n}_\tau,\psi_eP_e\jump{\bm{z}_h\cdot \bm{n}} )_{\partial \tau}\\
&= \sum_{\tau\in D(e)} \Big((\nabla \cdot (\bm{z}_h-\bm{z}),  \psi_eP_e\jump{\bm{z}_h\cdot \bm{n}})_\tau+(\bm{z}_h-\bm{z}, \nabla  (\psi_eP_e\jump{\bm{z}_h\cdot \bm{n}}))_\tau\Big)\\
&= \sum_{\tau\in D(e)} ((R_\tau(f), \psi_eP_e\jump{\bm{z}_h\cdot \bm{n}})_{\tau}+(\bm{z}_h-\bm{z}, \nabla  (\psi_eP_e\jump{\bm{z}_h\cdot \bm{n}}))_{\tau}),
\end{align*}
which, coupling with \eqref{extension}, inverse inequality and \eqref{eff-fh} yields
\begin{equation*}
\begin{split}
\|\jump{\bm{z}_h\cdot \bm{n}}\|_{0,e}^2&\leq C \sum_{\tau\in D(e)}  \Big(\|\bm{z}-\bm{z}_h\|_{0,\tau}
\|\nabla (\psi_eP_e\jump{\bm{z}_h\cdot \bm{n}})\|_{0,\tau}+\|f+\nabla \cdot \bm{z}_h\|_{0,\tau}\| \psi_eP_e\jump{\bm{z}_h\cdot \bm{n}}\|_{0,\tau}\Big)\\
&\leq C  \sum_{\tau\in D(e)} \Big(\|\bm{z}-\bm{z}_h\|_{0,\tau}
h_{\tau}^{-1}
\|\psi_eP_e\jump{\bm{z}_h\cdot \bm{n}}\|_{0,\tau}+\|f+\nabla \cdot \bm{z}_h\|_{0,\tau}\| \psi_eP_e\jump{\bm{z}_h\cdot \bm{n}}\|_{0,\tau}\Big)\\
&\leq C \Big(\Big(\sum_{\tau\in D(e)}\|\bm{z}-\bm{z}_h\|_{0,\tau}^2
\Big)^{1/2}+\Big(\sum_{\tau\in D(e)}h_\tau^2
\|f-f_h\|_{0,\tau}^2\Big)^{\frac{1}{2}}\Big)  h_e^{-\frac{1}{2}}\| \jump{\bm{z}_h\cdot \bm{n}}\|_{0,e}.
\end{split}
\end{equation*}
Thus
\begin{align*}
h_e^{1/2}\|\jump{\bm{z}_h\cdot \bm{n}}\|_{0,e}\leq C( \|\bm{z}-\bm{z}_h\|_{0,D(e)}+(\sum_{\tau\in D(e)}h_\tau^2\|f-f_h\|_{0,\tau}^2)^{1/2}).
\end{align*}
Triangle inequality implies
\begin{align*}
\|\bm{z}_h-\nabla u_h\|_0\leq \|\bm{z}-\bm{z}_h\|_0+\|\nabla (u-u_h)\|_0.
\end{align*}
Finally, the triangle inequality and trace inequality \eqref{trace} yield
\begin{align*}
\|\lambda_h\bm{n}_i\cdot\bm{n}_{ij}-\bm{z}_h\cdot \bm{n}_i\|_{0,e}&\leq C(\|\lambda-\lambda_h\|_{0,e}+\|\bm{z}\cdot \bm{n}-\bm{z}_h\cdot \bm{n}\|_{0,e})\\
&\leq C\Big( \|\lambda-\lambda_h\|_{0,e}+h_\tau^{-1/2}
\|\bm{z}-\bm{z}_h\|_{0,\tau}
+h_\tau^{1/2}\|\nabla (\bm{z}-\bm{z}_h)\|_{0,\tau}\Big)
\end{align*}
and
\begin{align*}
\|\jump{u_h}\|_{0,\hat{e}}=\|\jump{u-u_h}\|_{0,\hat{e}}\leq C\sum_{\tau\in D(\hat{e})}\Big( h_\tau^{-1/2}\|u-u_h\|_{0,\tau}+h_\tau^{1/2}\|\nabla (u-u_h)\|_{0,\tau}\Big).
\end{align*}
The preceding arguments complete the assertion.
\end{proof}

\begin{remark}{\rm It follows from Lemma~\ref{priori-estimate} that the orders of convergence for all the terms present in the right hand side of \eqref{eff-eta1} are comparable to $\|u-u_h\|_0$. Thus, this bound, combined with Theorem~\ref{thm:eta1}, implies that $\eta_1$ is an efficient and reliable estimator for the potential $L^2$ error. Similarly, $\eta_2$ is also an efficient and reliable estimator for the energy error.}
\end{remark}

\section{Numerical experiments}\label{Sec:numerical}
In this section we present several numerical experiments to demonstrate the performance of the proposed error estimators.
 The adaptive mesh pattern and convergence history are reported for each example. In all of our simulations, we use piecewise linear elements, i.e., $k=1$. Since the new triangulation $\mathcal{T}$ is only formed to define the method and it is not
 a refinement. Therefore, in our refinement algorithm, we will carry out the refinement on $\mathcal{T}_h$ by an estimator defined on each $\rho\in \mathcal{T}_h$. We define the error estimator as
\begin{align*}
\xi_{\rho}=\sum_{\tau\in \mathcal{T}, \tau\cap \rho\neq \emptyset}\eta_{\tau}^2.
\end{align*}
Moreover, for any subset $\mathcal{M}\subset \mathcal{T}_h$, we define
\begin{align*}
\xi^2(\mathcal{M}):=\sum_{\rho\in \mathcal{M}}\xi^2(\rho).
\end{align*}
Similar definitions can be applied for $\bar{\eta}_\tau$ defined in \eqref{eta-local-flux}.

Our adaptive refinement can be implemented by the following iteration:
\begin{enumerate}
  \item Start with an initial mesh $\mathcal{T}_h^0$.
  \item Solve the discrete problem \eqref{discrete} for $(\bm{u}_h^\ell, \bm{z}_h^\ell, p_h^\ell)$ with respect to $\mathcal{T}_h^\ell$.
  \item Compute $\xi_\rho, \forall \rho\in \mathcal{T}_h^\ell$.
  \item Mark the minimal set $\mathcal{M}\subseteq \mathcal{T}_h$ satisfying $\theta \sum_{\rho\in \mathcal{T}_{h}}\xi_{\rho}^{2}\leq \sum_{\rho\in \mathcal{M}}\xi_{\rho}^{2}$ for some fixed parameter $\theta\in(0,1)$.
  \item Refine marked triangles and compute $\mathcal{T}^{\ell+1}_h$ by {\it red and green refinement} for adaptive mesh. Update $\ell$ and go to step 2.
\end{enumerate}

\begin{example}
{\rm In this example, $\Omega=(0,1)^2$ and $\rho=1$, we consider the exact solution given by
\begin{align*}
u(x,y)=1000xye^{-100(x^2+y^2)}.
\end{align*}
The global domain $\Omega$ is decomposed into four square subdomains and the initial grid in each subdomain is $2\times 2$.}
\end{example}

The contour plot of the exact solution and the adaptive mesh pattern arising from the energy error estimator are reported in
{\sc Fig}.~\ref{Ex1-mesh}. The adaptive mesh pattern for the error estimator in potential $L^2$ error is similar and is omitted for simplicity. We note that the grids are appropriately refined along the boundary layers.

The convergence history for $\|u-u_h\|_0$ and $\eta_1$ as well as $\|\nabla (u-u_h)\|_0$ and $\eta_2$ are displayed in {\sc Fig}.~\ref{Ex1-convergence}. We observe that the adaptive solution needs much fewer elements to provide the same accuracy.
\begin{figure}[H]
\centering
\scalebox{0.3}{
\includegraphics[width=20cm]{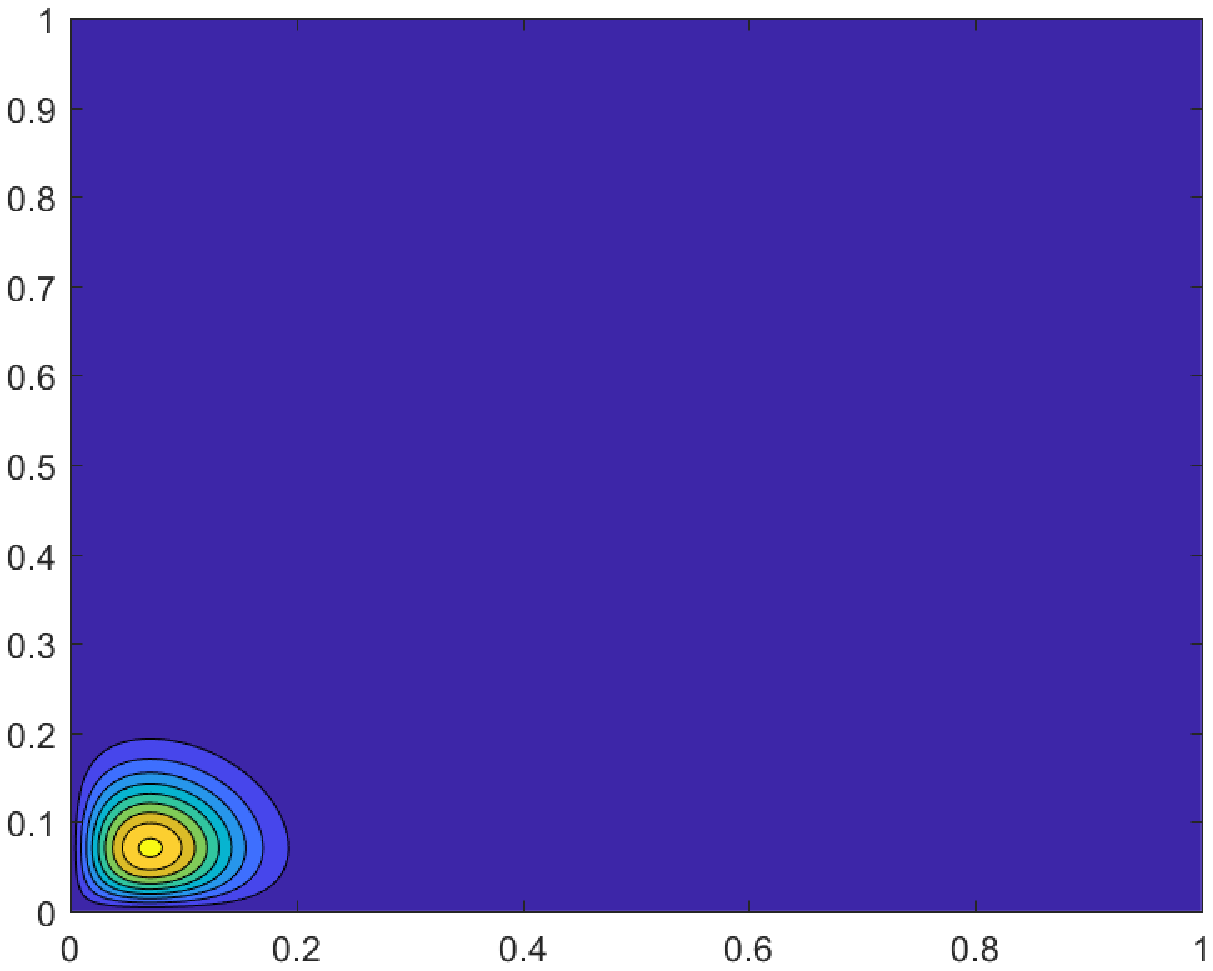}
}
\scalebox{0.3}{
\includegraphics[width=20cm]{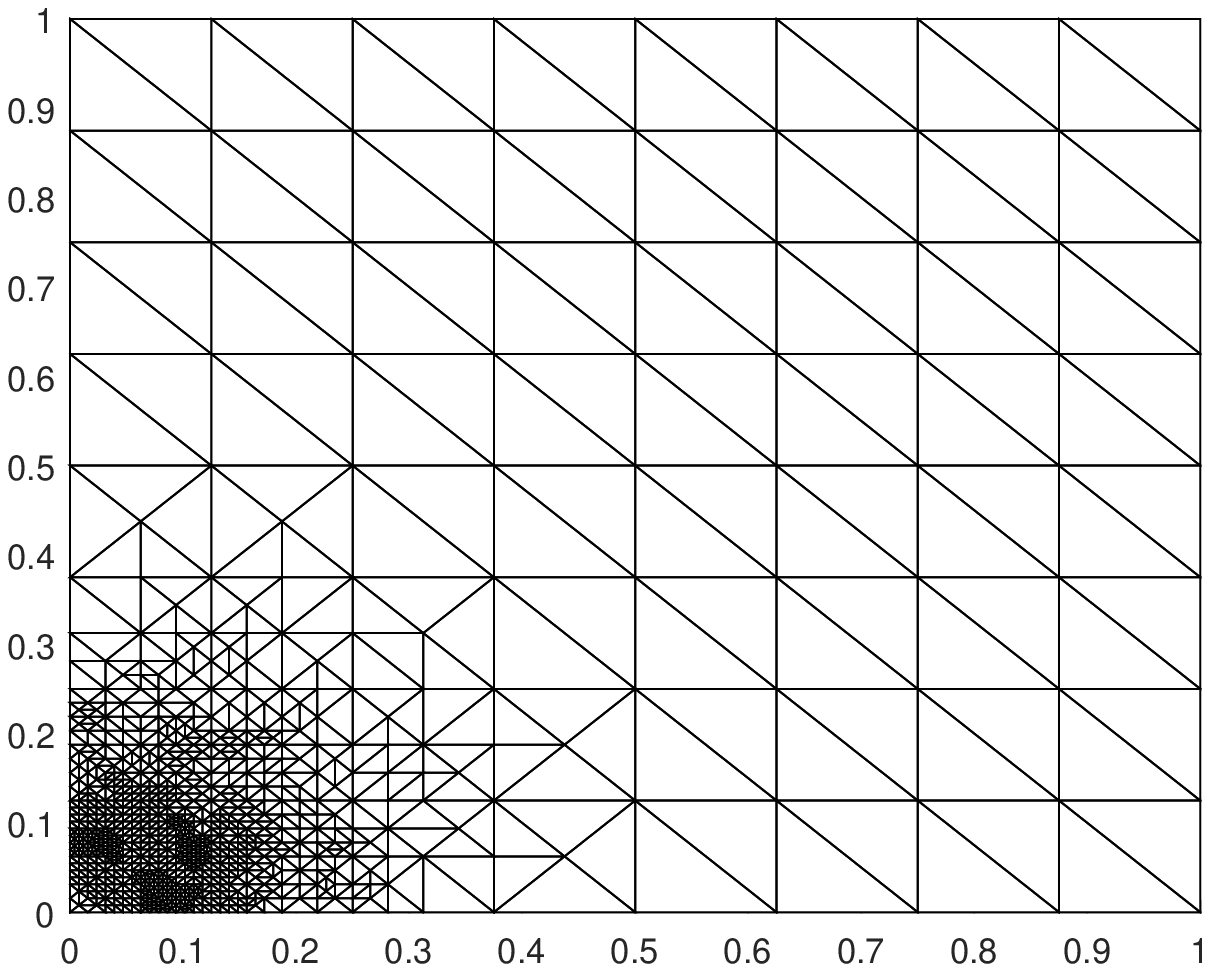}
}
\caption{The contour plot of the exact solution and the adaptive mesh pattern for example 1.}
\label{Ex1-mesh}
\end{figure}

\begin{figure}[H]
\centering
\scalebox{0.3}{
\includegraphics[width=20cm]{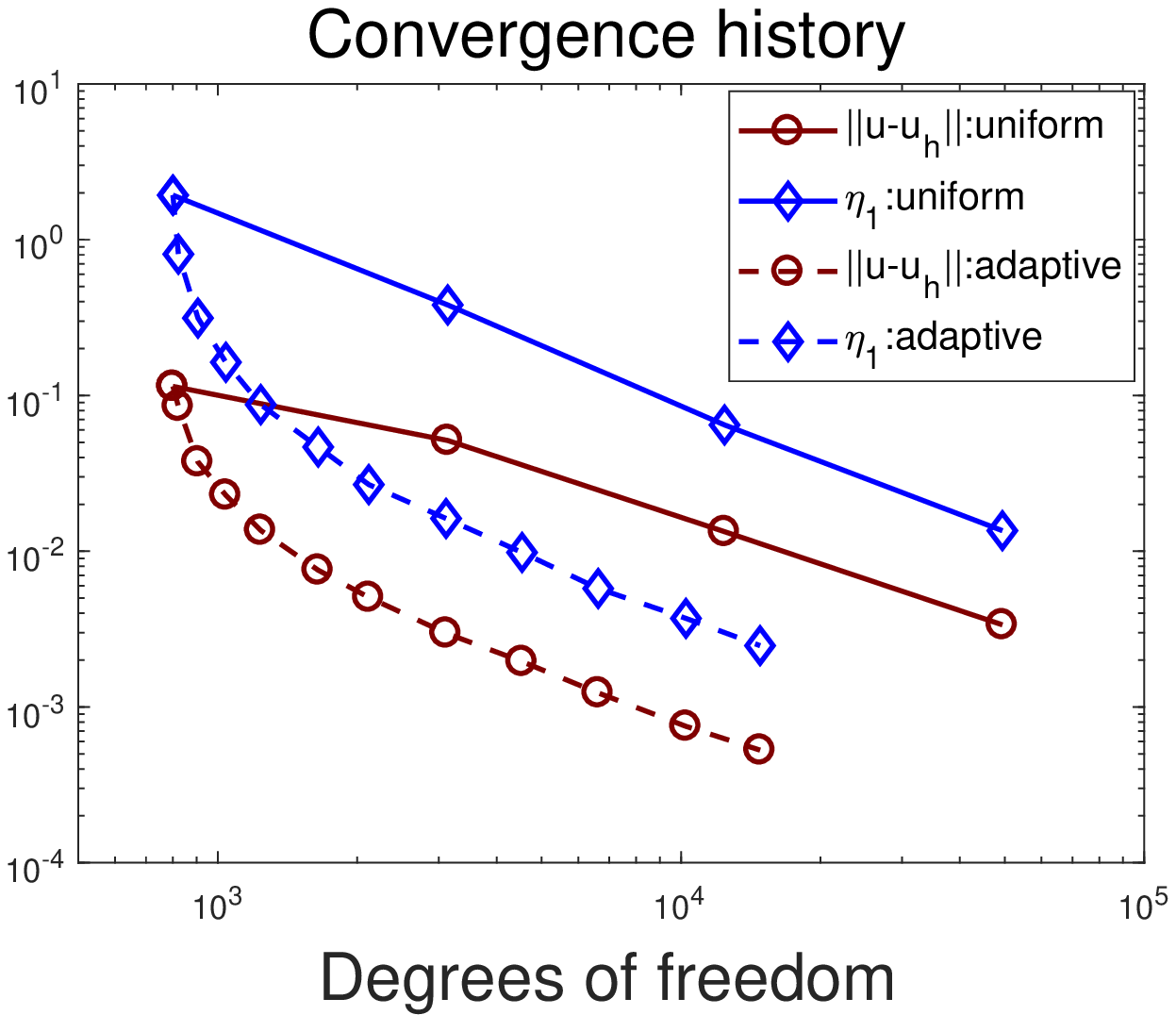}
}
\scalebox{0.3}{
\includegraphics[width=20cm]{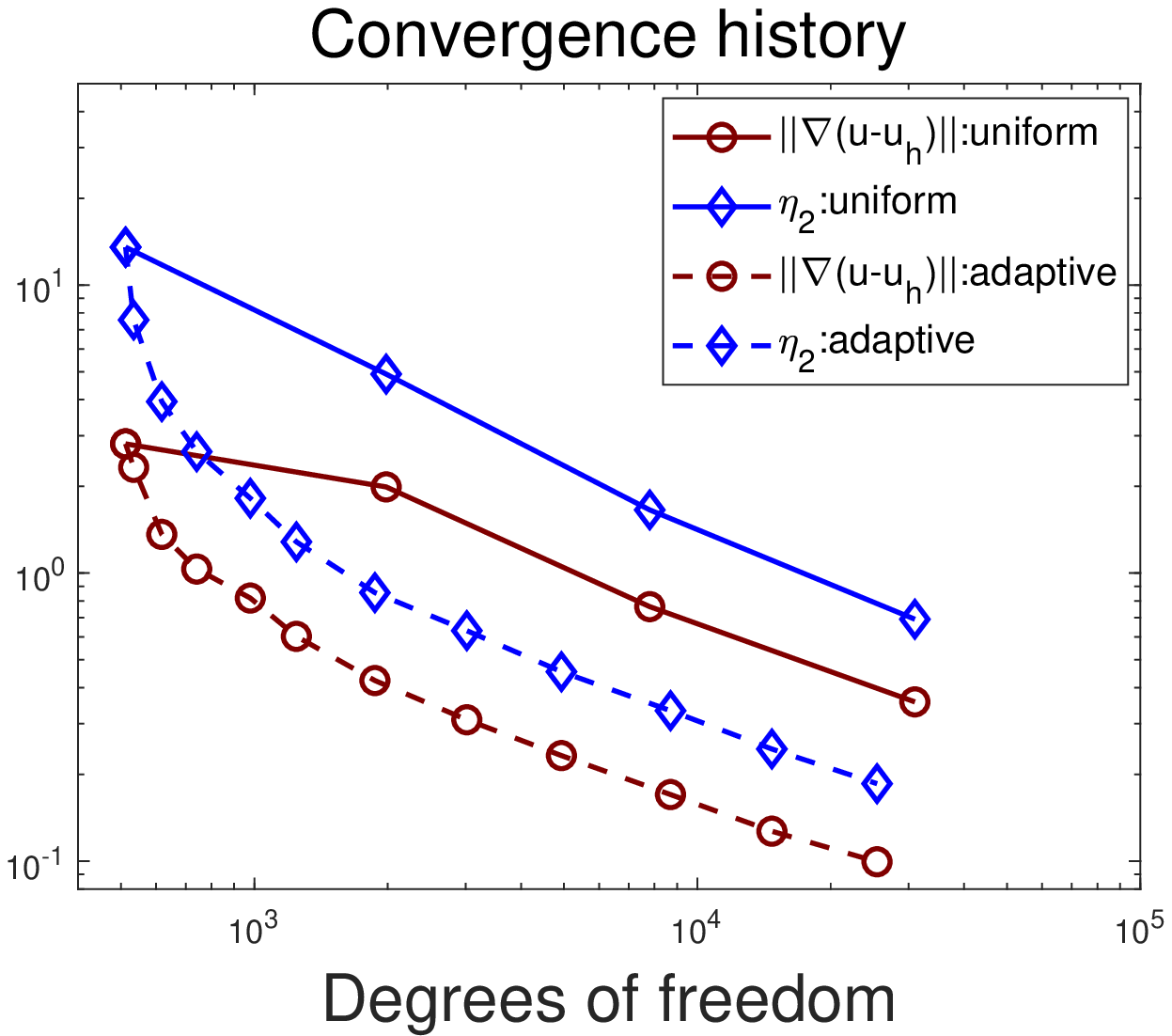}
}
\caption{Convergence history: potential error estimator (left) and energy error estimator (right).}
\label{Ex1-convergence}
\end{figure}

\begin{example}
{\rm In this example, we consider a non-smooth solution in $H^{3/2-\delta}(\Omega)$ with $\delta>0$ defined by $u(r,\theta)=r^{1/2}\cos(2\theta)$ with polar coordinates $(r,\theta)$ centered at $(0.5,0.5)$. In addition, we let $\rho=1$. We assume that the computational domain is decomposed into four square subdomains.}
\end{example}
The initial mesh and adaptive mesh pattern using the energy error estimator are shown in {\sc Fig}.~\ref{Ex2-mesh}. We observe that the singularly can be well captured by the proposed error estimators.

The convergence history for $\|u-u_h\|_0$ and $\eta_1$ as well as $\|\nabla (u-u_h)\|_0$ and $\eta_2$ under uniform refinement and adaptive refinement, respectively, are shown in {\sc Fig}.~\ref{Ex2-convergence}. It is clear that the order of convergence for $\|u-u_h\|_0$ and $\eta_1$ under uniform refinement is approximately 1.5, while the order of convergence for $\|u-u_h\|_0$ and $\eta_1$ under adaptive refinement is approximately 2. On the other hand, the order of convergence for $\|\nabla (u-u_h)\|_0$ and $\eta_2$ under uniform refinement is approximately 0.5, while the order of convergence for $\|\nabla (u-u_h)\|_0$ and $\eta_2$ under adaptive refinement is approximately 1. This demonstrates that under uniform refinement we can achieve the reduced convergence rate reflecting singularity, and optimal convergence rates can be recovered by employing adaptive mesh refinement. This example highlights that adaptive mesh refinement outperforms uniform mesh refinement and can lead to optimal convergence rate even with low solution regularity.
\begin{figure}[H]
\centering
\includegraphics[width=6cm]{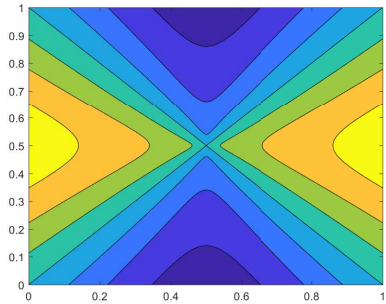}
\caption{The contour plot of the exact solution for example 2.}
\label{Ex2-exact}
\end{figure}

\begin{figure}[H]
\centering
\scalebox{0.3}{
\includegraphics[width=20cm]{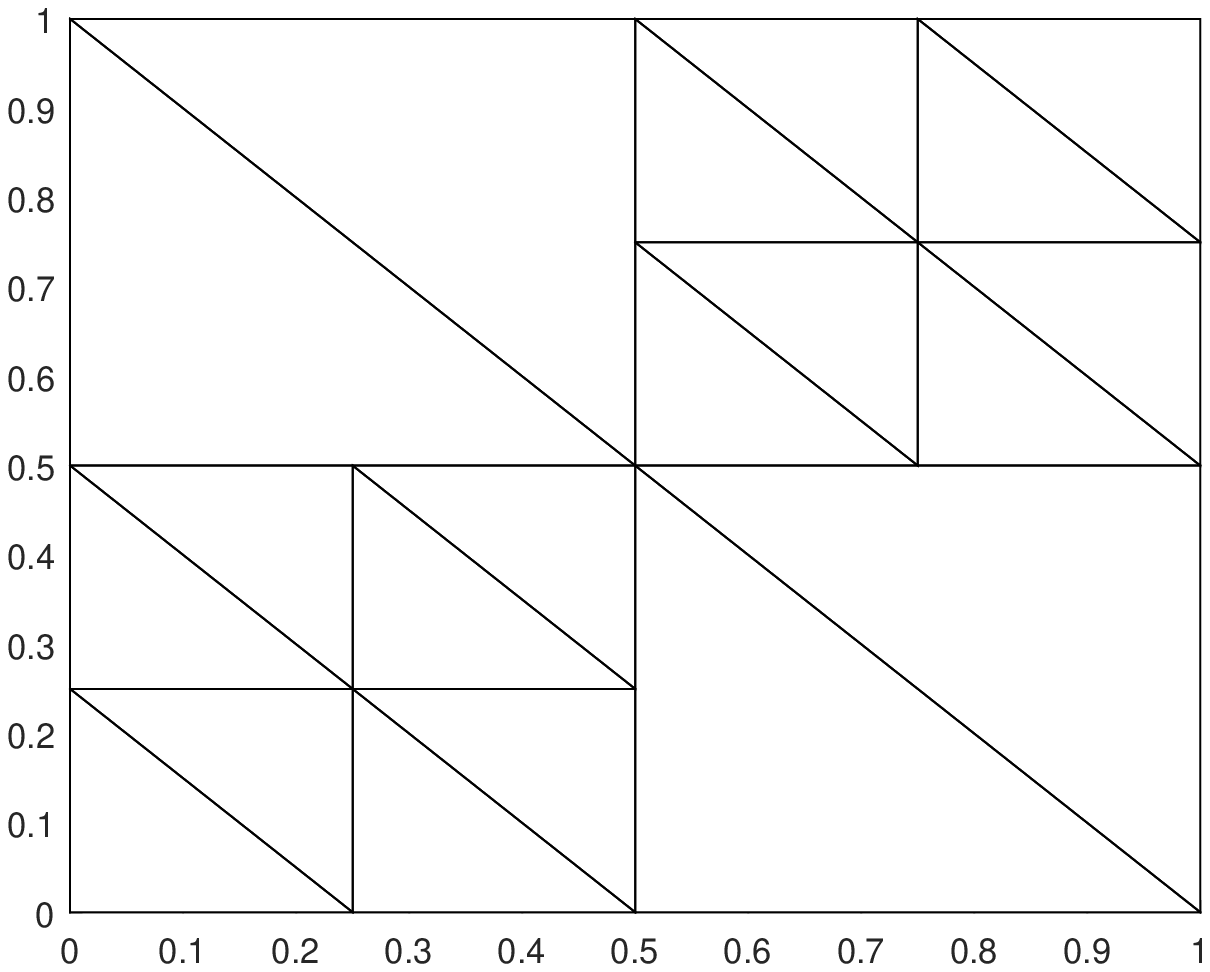}
}
\scalebox{0.3}{
\includegraphics[width=20cm]{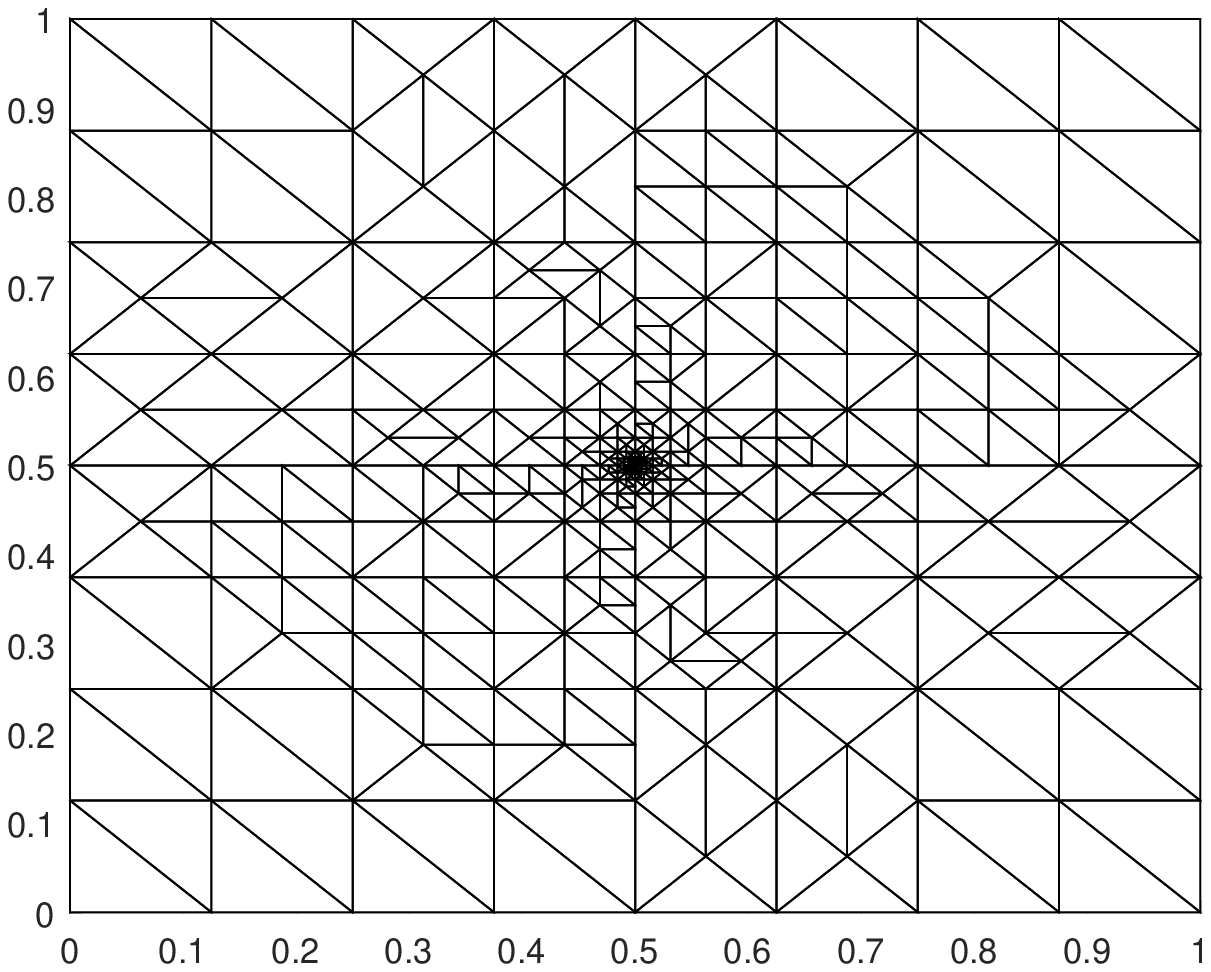}
}
\caption{Initial mesh (left) and adaptive mesh pattern (right).}
\label{Ex2-mesh}
\end{figure}

\begin{figure}[H]
\centering
\scalebox{0.3}{
\includegraphics[width=20cm]{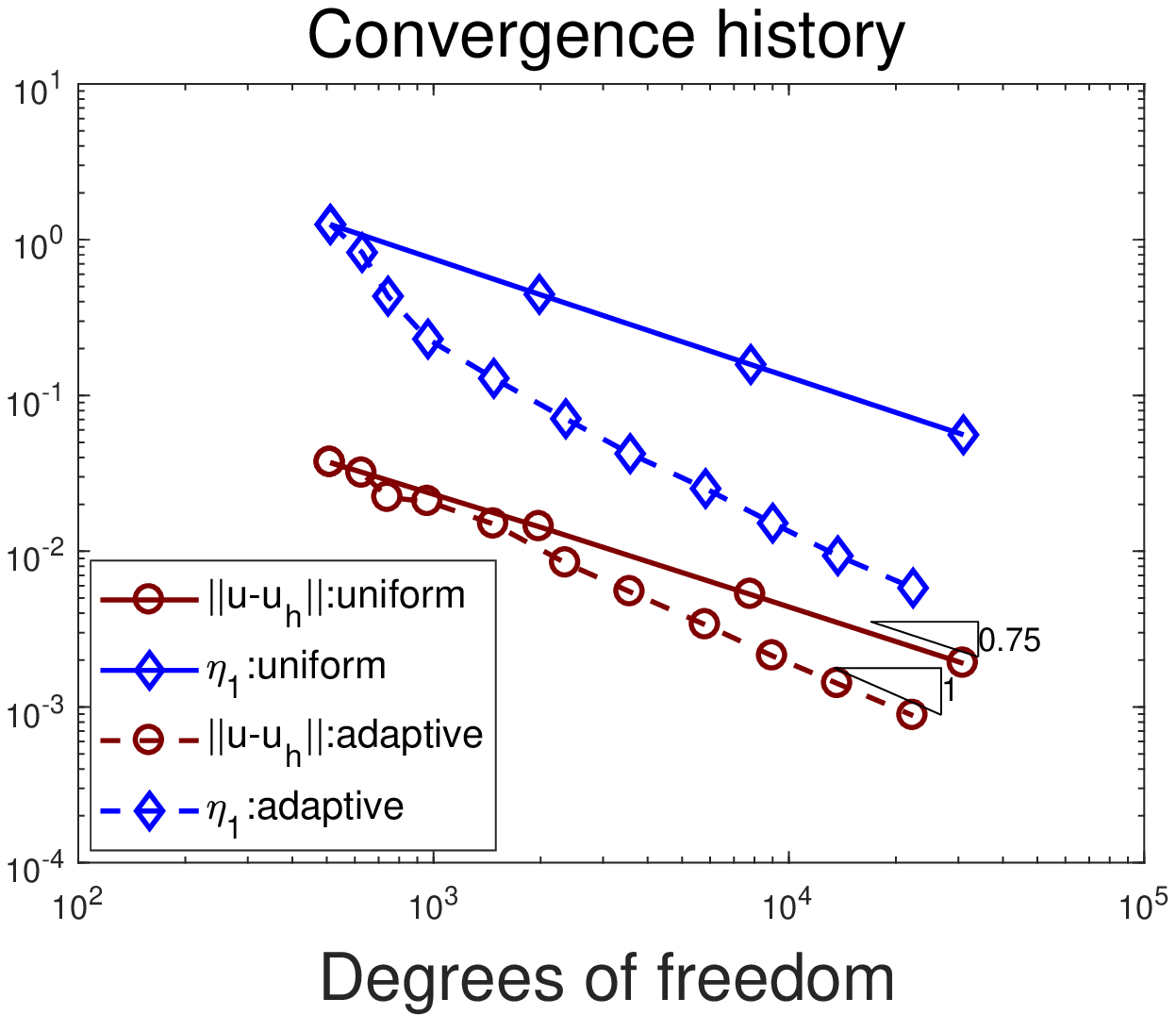}
}
\scalebox{0.3}{
\includegraphics[width=20cm]{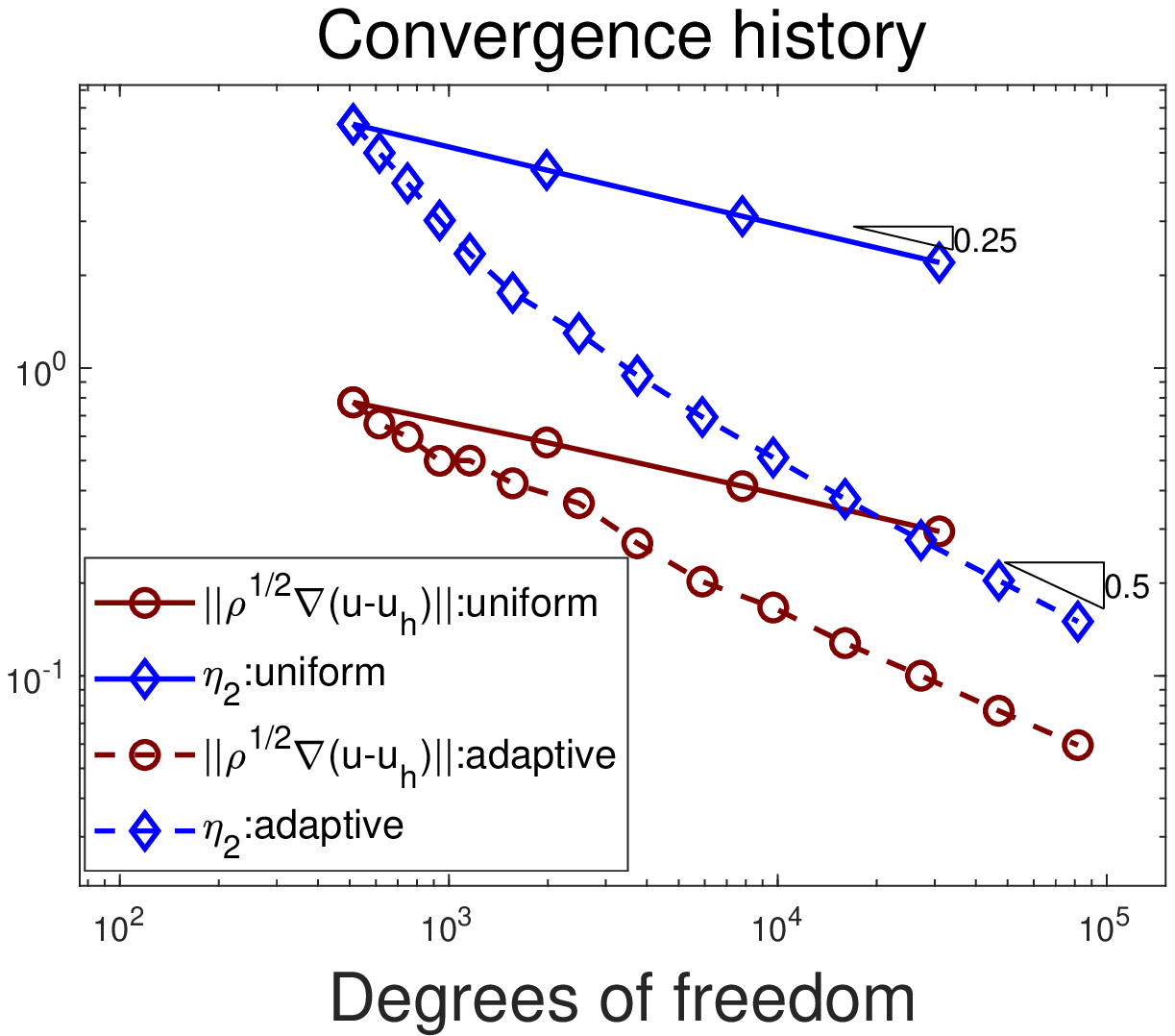}
}
\caption{Convergence history: potential error estimator (left) and energy error estimator (right).}
\label{Ex2-convergence}
\end{figure}

\begin{example}
{\rm
Our third example is an interface problem which exhibits an interface singularity, cf. \cite{rivi, martin}.
Consider $\Omega=(-1,1)\times(-1,1)$, divided into four subdomains $\Omega_i$ along the Cartesian axes (the subregion $\{x>0,y>0\}\cap \Omega$ is denoted as $\Omega_1$ and the subsequent numbering is done counterclockwise). The exact solution is given by
\begin{eqnarray}
u(r,\theta)=r^{\alpha}(K_{i}\sin(\alpha \theta)+S_{i} \cos(\alpha \theta))\nonumber
\end{eqnarray}
in each $\Omega_{i}$. The solution is continuous across the interfaces and the normal component of its flux $\bm{z}$ is continuous; it exhibits a singularity at the origin and it only belongs to $H^{1+\alpha}(\Omega)$.
We take the piecewise constant coefficient as $\rho_1=\rho_3=5, \rho_2=\rho_4=1$ and $\alpha=0.53544095$.
The values of $K_{i}, S_{i}$ can be found in, e.g., \cite{martin}.
}
\end{example}

The initial mesh is the same as {\sc Fig}.~\ref{Ex2-convergence} and the convergence history and adaptive mesh pattern are reported in {\sc Fig}.~\ref{Ex3-convergence} and {\sc Fig}.~\ref{Ex3-mesh}, respectively. Again, reduced convergence rate can be achieved for uniform refinement due to low solution regularity, while optimal convergence rates can be recovered by employing adaptive mesh refinement. In addition, the singularity is well captured. This example once again illustrates that the proposed error estimators can guide adaptive mesh refinement.

\begin{figure}[H]
\centering
\scalebox{0.3}{
\includegraphics[width=20cm]{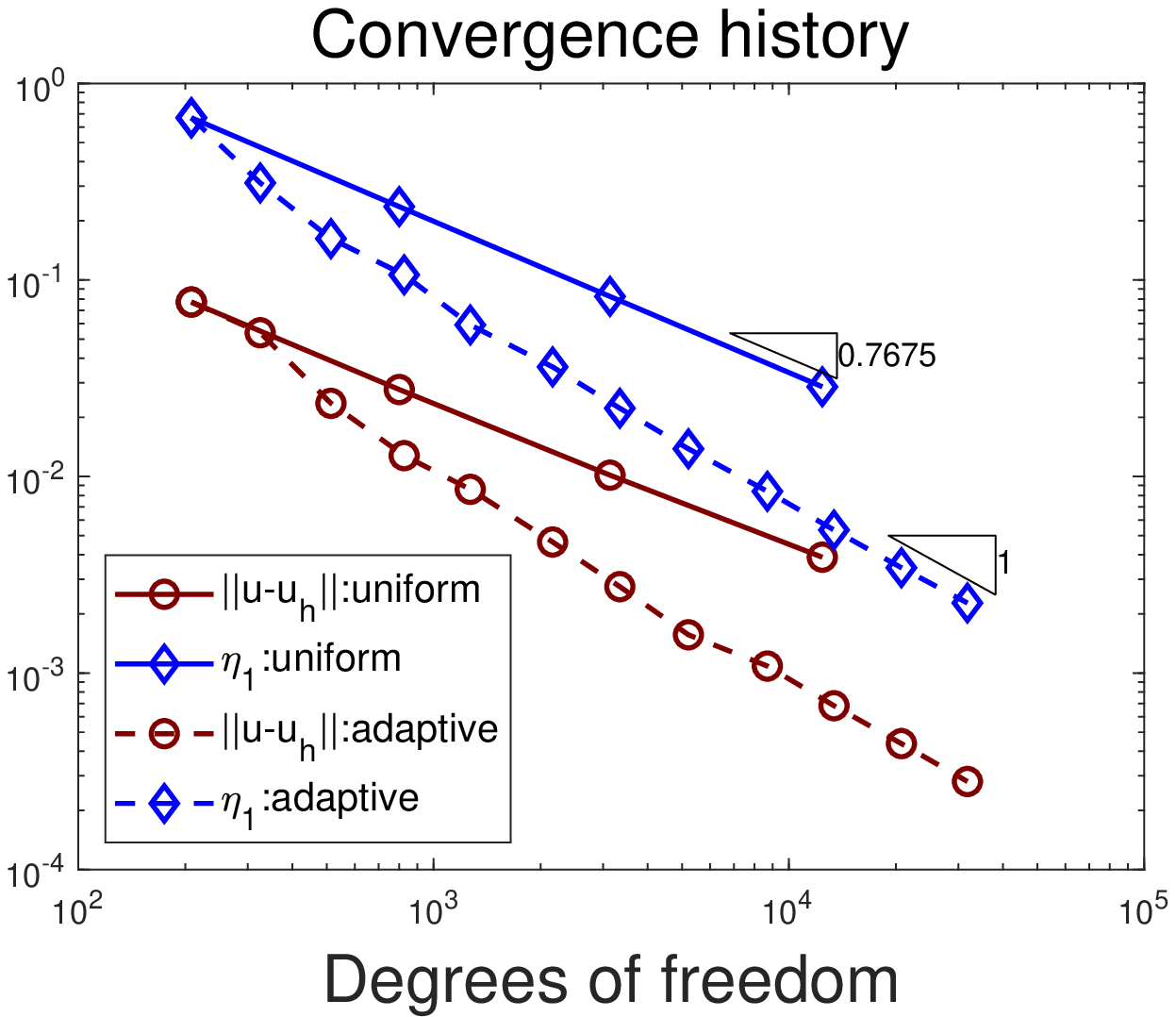}
}
\scalebox{0.3}{
\includegraphics[width=20cm]{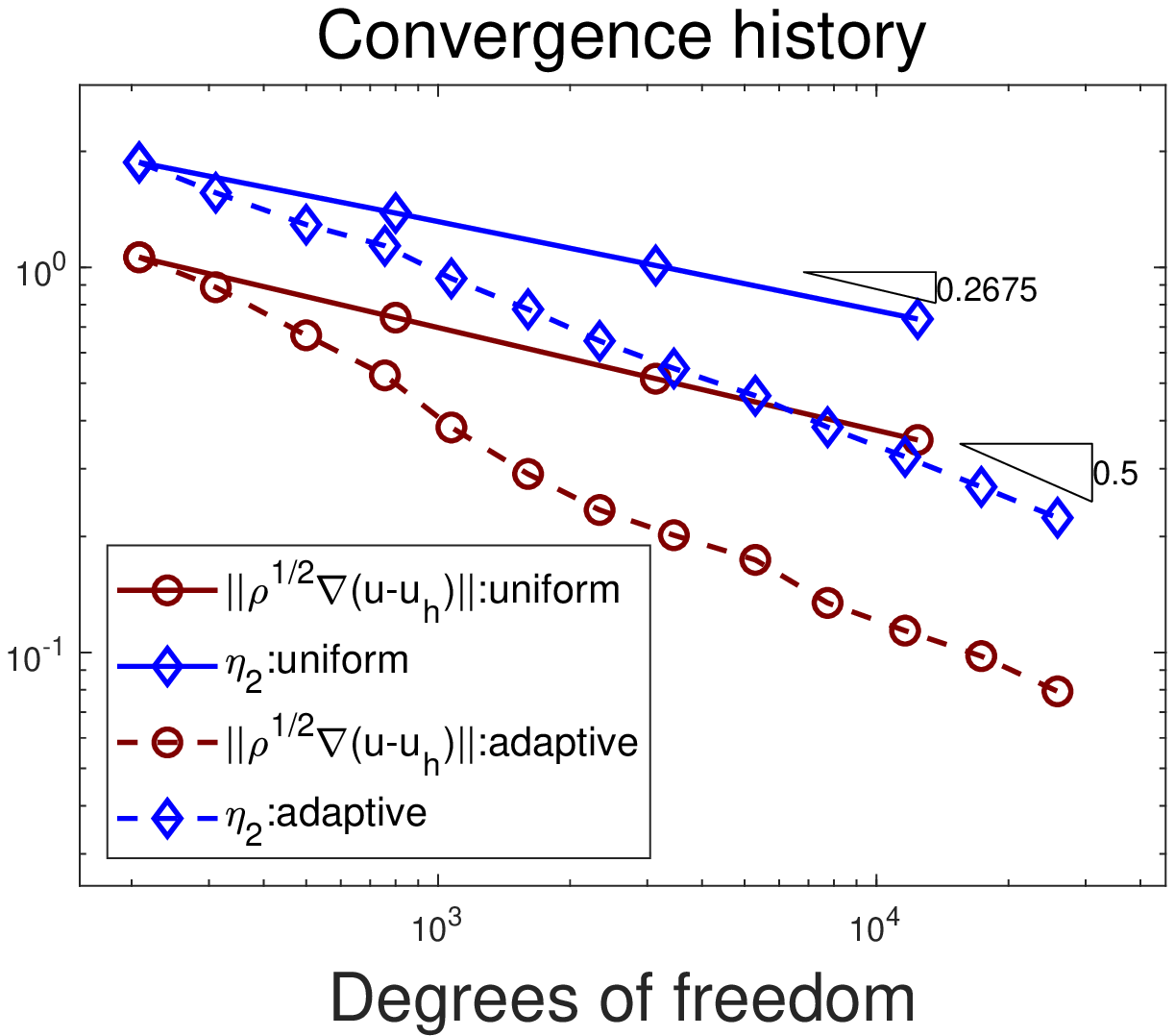}
}
\caption{Convergence history: potential error estimator (left) and energy error estimator (right).}
\label{Ex3-convergence}
\end{figure}

\begin{figure}[H]
\centering
\scalebox{0.3}{
\includegraphics[width=20cm]{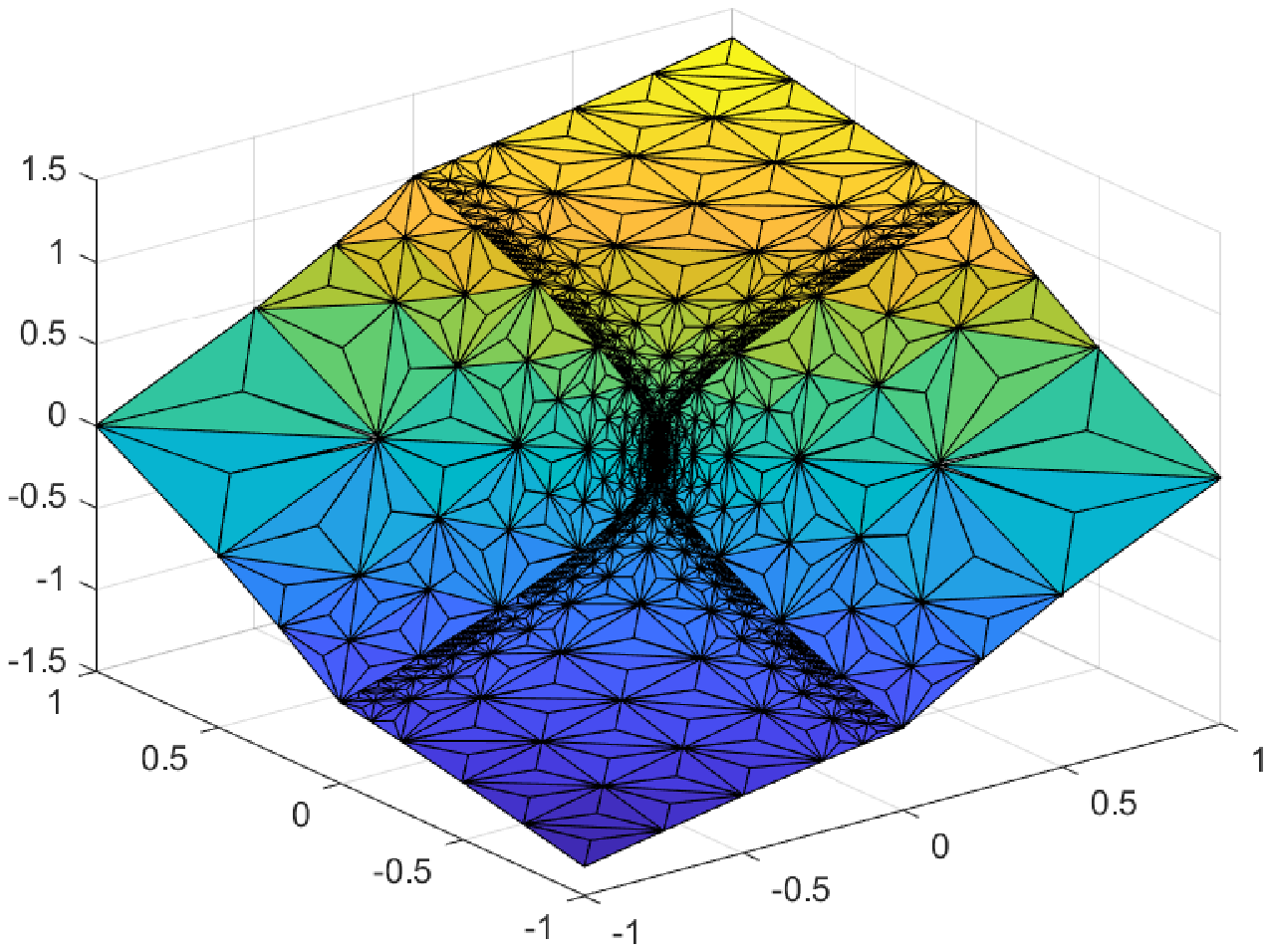}
}
\scalebox{0.3}{
\includegraphics[width=20cm]{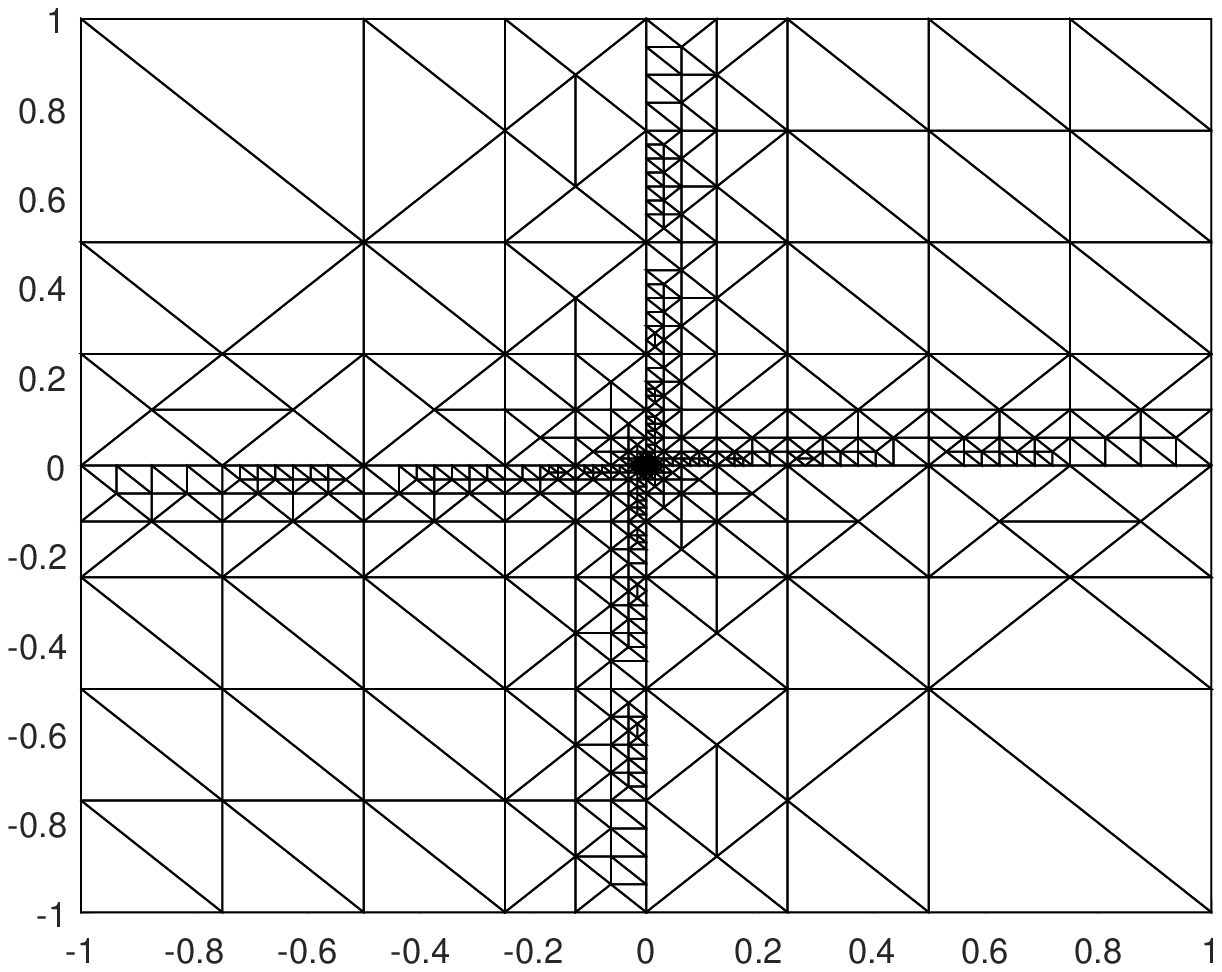}
}
\caption{Approximate solution on adaptively refined meshes (left) and adaptive mesh pattern for energy error estimator (right).}
\label{Ex3-mesh}
\end{figure}

\section{Conclusion}\label{Sec:conclusion}

In this paper, we have proposed two residual-type error estimators in potential $L^2$ error and energy error, respectively. The proposed error estimators are proved to be reliable and efficient. The key idea is to exploit the duality argument for potential $L^2$ error. To derive an error estimator in energy error, we decompose the energy error into conforming part and nonconforming part via the introduction of an auxiliary function. The numerical results demonstrate that the singularities can be well captured by the proposed error estimators, in addition, the superiority of adaptive mesh refinement over uniform mesh refinement is clearly visible in the improved convergence rate for solutions of limited regularity.

\section*{Acknowledgements}

The research of Eric Chung is partially supported by the Hong Kong RGC General Research Fund (Project numbers 14304217 and 14302018), CUHK Faculty of Science Direct Grant 2018-19 and NSFC/RGC Joint Research Scheme (Project number HKUST620/15).

\end{document}